\documentclass[11pt]{amsart}
\usepackage[utf8]{inputenc}
\usepackage[english]{babel}
\usepackage{graphicx}
\usepackage{amssymb}
\usepackage{amsmath}
\usepackage[mathscr]{euscript}
\usepackage{accents}

\usepackage{tikz}
\usetikzlibrary{decorations.markings}

\usepackage{microtype} % To shrink space between letters: \textls[-80]

\theoremstyle{plain}	% italicizes text (of thms, cors, and lemmas)
\newtheorem{theorem}{Theorem}[section]
\newtheorem{proposition}[theorem]{Proposition}

\newtheorem{lemma}[theorem]{Lemma}
\theoremstyle{definition} % does not italicize text (of defs, exs, and prfs)
\newtheorem{definition}[theorem]{Definition}
\newtheorem{example}[theorem]{Example}

\title[DW Invariants of Seifert Surfaces in 3-Manifolds]{Dijkgraaf-Witten Type Invariants of Seifert Surfaces in 3-Manifolds}
\author{I.\ J.\ Lee and D.\ N.\ Yetter}
%\date{\today}

\begin{document}
\tikzset{midarrow/.style={
		decoration={markings,
			mark= at position 0.5 with {\arrow{#1}},
		},
		postaction={decorate}
	}
}

\begin{abstract}  We introduce defects, with internal gauge symmetries, on a knot and Seifert surface to a knot into the combinatorial construction of finite gauge-group Dijkgraaf-Witten theory.  The appropriate initial data for the construction are certain three object categories with coefficients satisfying a partially degenerate cocycle condition.

\end{abstract}

\maketitle

\section{Introduction}

Throughout we work exclusively in the PL category, all manifolds are compact, and we write composition in diagrammatic order. The goal of this paper is to construct state-sum invariants of triples $K \subset \Sigma \subset M$, in which $M$ is an oriented 3-manifold, $K$ an oriented knot or link in $M$ and $\Sigma$ an oriented surface with boundary embedded in $M$ with $\partial \Sigma = K$, extending and generalizing the state-sum construction of finite-gauge group Dijkgraaf-Witten theory given by Wakui \cite{W}.   To avoid confusion between link in the knot-theoretic sense and the link of a simplex in the sense of combinatorial topology, we will always refer to $K$ as ``the knot'', even though it might be a link in the knot-theoretic sense.   The arrangement $K \subset \Sigma \subset M$ is a very simple stratified space of the sort Crane and Yetter \cite{CY} called  ``starkly stratified ''.  However, as was the case in two dimensions (cf. \cite{DPY}), our constructions will require us to restrict our attention to flag-like triangulations.  The first task will, thus, be to show that the instances of extended bistellar moves which preserve flag-likeness suffice to give all PL homeomorphisms between stratified PL spaces of the form $K \subset \Sigma \subset M$.

From \cite{CY} recall

\begin{definition} A {\em starkly stratified space} is a PL space $X$ equipped with a filtration

\[ X_0 \subset X_1 \subset \ldots \subset X_{n-1} \subset X_n = X \]

\noindent satisfying

\begin{enumerate}
\item There is a triangulation $\mathcal T$ of $X$ in which each $X_k$ is a subcomplex.
\item For each $k = 1,\ldots n$ $X_k \setminus X_{k-1}$ is a(n open) $k$-manifold.
\item If $C$ is a connected component of $X_k \setminus X_{k-1}$, then $\mathcal T$ restricted
to $\bar{C}$ gives $\bar{C}$ the structure of a combinatorial manifold with boundary.
\item For each combinatorial ball $B_k$ with $\accentset{\circ}B_k \subset X_k \setminus X_{k-1}$, $\accentset{\circ}B_k$ admits a closed neighborhood $N$ given inductively as a cell complex as follows (although we require $B_k$ to be a combinatorial ball, the triangulation is then ignored):

\noindent $N = N_n$, where $N_m$ for $k \leq m \leq n$ is given inductively by 

\[ N_k = B_k\] 

\noindent and 

\[ N_{\ell+1} = N_\ell \cup \bigcup_{v \in S_\ell} L(v) \ast v \]

\noindent for $S_\ell$ a finite set of points in $X\setminus X_\ell$, andl $L(\cdot)$ a function on $S_\ell$
valued in 

\[ \{ L | L\; \mbox{\rm is a combinatorial ball and}\; B_k \subset L \subset N_\ell  \}\]

\noindent We will call such a neighborhood of the interior of a combinatorial ball lying in a stratum of the same dimension a {\em stark neighborhood}.

\end{enumerate}
\end{definition}

It is quite easy to show that for a knot or link $K$ with a Seifert surface $\Sigma$ in a 3-manifold $M$, the filtered space $\emptyset \subset K \subset \Sigma \subset M$ is a starkly stratified space -- the stark neighborhoods of open simplexes $\Sigma$ are simply the joins of their closure with two points of $M \setminus \Sigma$, one on each side of $\Sigma$, $\Sigma$ being, of course, two-sided, while those of open simplexes in $K$ are iterated joins, first with a point in $\Sigma$ and a point in $M \setminus \Sigma$ and then of with two points of $M \setminus \Sigma$, one on either side of $\Sigma$ (or more properly the union of $\Sigma$ with the first join, which is easily seen to also be an oriented surface with boundary). 

Throughout, despite allowing the possibility that is is actually a link with more than one component, we will refer to $K$ as ``the knot'' to avoid the ambiguity of the word ``link'' in the context of PL topology, as we will have cause to mention the link of a simplex in the PL sense fairly often in our exposition.

We will also need

\begin{definition}
A triangulation $\mathcal T$ of a stratified PL space 

\[ X_0 \subset X_1 \subset \ldots \subset X_{n-1} \subset X_n = X \]

\noindent is {\em flag-like} if each of the $X_i$ is a subcomplex, and moreover for each simplex $\sigma$ of
$\mathcal T$, the restriction of the filtration to the simplex, that is the distinct non-empty intersections in the sequence

\[ X_0\cap \sigma \subset X_1 \cap \sigma \subset \ldots \subset X_{n-1} \cap \sigma \subset \sigma \]

form a (possibly incomplete) simplicial flag.
\end{definition}

Observe that in any starkly stratified space, it follows from the iterative construction of a stark neighborhood, that the restriction of the filtration to any simplex of a stark neighborhood is a simplical flag.  Moreover, in general, flag-like triangulations are plentiful.  In particular, it is easy to show

\begin{lemma} If  $\mathcal T$ is a triangulation of a stratified PL space 

\[ X_0 \subset X_1 \subset \ldots \subset X_{n-1} \subset X_n = X \]

\noindent for which each $X_i$ is a subcomplex, then its barycentric subdivision $\beta X$ is flag-like.
\end{lemma}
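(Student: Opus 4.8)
The plan is to exploit the combinatorial description of the barycentric subdivision together with the hypothesis that each $X_i$ is a subcomplex. Recall that the vertices of $\beta\mathcal T$ are the barycenters $\hat\tau$ of the simplices $\tau$ of $\mathcal T$, and that a set of barycenters $\{\hat{\tau_0},\ldots,\hat{\tau_k}\}$ spans a simplex of $\beta\mathcal T$ precisely when the corresponding simplices form a chain $\tau_0 < \tau_1 < \ldots < \tau_k$ under the face relation of $\mathcal T$. The key bookkeeping device will be the level function $\lambda(\tau) := \min\{\, i : \tau \text{ is a simplex of } X_i \,\}$, which is well defined because $X_n = X$ contains every simplex.

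First I would record the two elementary properties of $\lambda$ that do all the work. Because each $X_i$ is a subcomplex, a point in the relative interior of a simplex $\tau$ lies in $X_i$ if and only if $\tau$ itself is a simplex of $X_i$; in particular $\hat\tau \in X_i$ if and only if $\lambda(\tau) \le i$. Because the $X_i$ form a nested family of subcomplexes, $\lambda$ is monotone under the face relation: if $\tau' \le \tau$, then $\tau'$ is a face of a simplex of $X_{\lambda(\tau)}$, so $\lambda(\tau') \le \lambda(\tau)$.

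Next I would identify $X_i \cap \sigma$ for a fixed simplex $\sigma = \langle \hat{\tau_0},\ldots,\hat{\tau_k}\rangle$ of $\beta\mathcal T$ with $\tau_0 < \ldots < \tau_k$. Since $X_i$ is a subcomplex of $\mathcal T$, its barycentric subdivision $\beta X_i$ is a subcomplex of $\beta\mathcal T$, and the faces of $\sigma$ lying in $\beta X_i$ are exactly those spanned by the $\hat{\tau_j}$ with $\lambda(\tau_j)\le i$. Using the standard fact that the intersection of a subcomplex with a closed simplex of the ambient complex equals the union of those faces of the simplex contained in the subcomplex, I conclude that $X_i \cap \sigma$ is the union of these faces. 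By monotonicity of $\lambda$ along the chain, the indices $j$ with $\lambda(\tau_j)\le i$ form an initial segment $\{0,\ldots,m_i\}$, so this union is a single face $F_i$ of $\sigma$, namely the one spanned by the barycenters of level at most $i$.

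Finally, as $i$ ranges over $0,\ldots,n$ the faces $F_i = X_i\cap\sigma$ are nested, $F_0 \subseteq F_1 \subseteq \ldots \subseteq F_n = \sigma$, because the defining initial segments grow with $i$. Discarding empty and repeated terms leaves a strictly increasing chain of faces of $\sigma$, which is precisely a (possibly incomplete) simplicial flag, establishing flag-likeness for $\sigma$; as $\sigma$ was arbitrary, $\beta\mathcal T$ is flag-like. The step I would flag as the crux is the identification of the point-set intersection $X_i\cap\sigma$ with a single face of $\sigma$: this is exactly where the subcomplex hypothesis is used essentially rather than the $X_i$ being arbitrary closed subsets, and everything surrounding it is monotonicity bookkeeping.
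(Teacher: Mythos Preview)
Your argument is correct. The paper itself does not supply a proof of this lemma---it is stated as ``easy to show'' and left to the reader---so there is no authorial proof to compare against, but the approach you take via the level function $\lambda(\tau)=\min\{i:\tau\subset X_i\}$ and the monotonicity of $\lambda$ along face chains is exactly the natural one and almost certainly what the authors had in mind.
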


As our construction will require a flag-like triangulation, the most direct way to show topological invariance will be show that combinatorial moves which preserve flag-likeness suffice to give all PL homeomorphisms between our stratified spaces $K \subset \Sigma \subset M$.

\begin{definition}
An Alexander move (stellar subdivision or stellar weld), a Pachner move (bistellar move), or an extended Pachner move (extended bistellar move in the sense of \cite{CY}) on a triangulation of a stratified space is {\em flag-like} if the restriction of the filtration to each simplex of both its initial and its final state is a simplicial flag.
\end{definition}

\section{From Alexander Moves to Extended Pachner Moves}

As observed in \cite{DPY}, any Alexander subdivision is flag-like, while there exist non-flag-like Alexander welds, but the theorem of \cite{DPY} that flag-like Alexander moves suffice to characterize PL homeorphism of stratified PL spaced equipped with flag-like triangulations hold in full generality:

\begin{theorem}
If $\mathcal T$ and ${\mathcal T}^\prime$ are flag-like triangulations of a stratified PL spaces

\[ X_0 \subset X_1 \subset \ldots \subset X_{n-1} \subset X_n = X  \]

\noindent and

\[ X_0^\prime \subset X_1^\prime \subset \ldots \subset X_{n-1}^\prime \subset X_n^\prime = X^\prime  , \]

\noindent respectively, then there is a stratification-preserving PL homeomorphism from $X$ to $X^\prime$ if and only if there is a sequence of flag-like Alexander moves starting with $\mathcal T$ and resulting in a triangulation of $X$ which is combinatorially equivalent to the triangulation ${\mathcal T}^\prime$ of $X^\prime$.
\end{theorem}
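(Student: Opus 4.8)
My plan is to dispatch the easy direction directly and to reduce the hard direction to connecting two flag-like triangulations of a single stratified space, after which I would invoke the classical stellar subdivision theory of Alexander and Newman, organized so that every intermediate move is flag-like. For the easy direction, I note that each flag-like Alexander move fixes the underlying polyhedron and its filtration while passing between flag-like triangulations; interpreting the terminal ``combinatorial equivalence'' to $\mathcal{T}'$ as one respecting the filtration (carrying each $X_i$ to $X_i'$), it induces a simplicial, hence PL, stratification-preserving homeomorphism $X \to X'$, and composing the moves yields the desired map.

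For the converse, I would start from a stratification-preserving PL homeomorphism $h\colon X \to X'$ and pull $\mathcal{T}'$ back along $h$ to obtain a triangulation $\mathcal{S} := h^{-1}(\mathcal{T}')$ of $X$ making $h$ simplicial. Since $h$ carries each $X_i$ onto $X_i'$, the triangulation $\mathcal{S}$ is flag-like, has each $X_i$ as a subcomplex, and is combinatorially equivalent to $\mathcal{T}'$ in a filtration-respecting way. It then suffices to connect $\mathcal{T}$ and $\mathcal{S}$, two flag-like triangulations of the \emph{same} stratified space $X$, by flag-like Alexander moves ending at a triangulation combinatorially equivalent to $\mathcal{S}$.

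The device I would exploit is that, as recalled above, every stellar subdivision of a flag-like triangulation is again flag-like, so stellar subdivisions are always flag-like moves while welds need not be. Suppose one can produce a stratification-compatible common stellar subdivision of $\mathcal{T}$ and $\mathcal{S}$, i.e.\ iterated stellar subdivisions $\mathcal{T} = \mathcal{T}_0 \to \cdots \to \mathcal{T}_p = \mathcal{U}$ and $\mathcal{S} = \mathcal{S}_0 \to \cdots \to \mathcal{S}_q = \mathcal{U}'$ with $\mathcal{U} \cong \mathcal{U}'$ by a filtration-respecting combinatorial isomorphism. Every $\mathcal{T}_i$ and $\mathcal{S}_j$ is then flag-like, and I would build the required sequence by subdividing $\mathcal{T}$ up to $\mathcal{U}$ and then running the reversed subdivisions $\mathcal{S}_q \to \cdots \to \mathcal{S}_0$, transported across $\mathcal{U} \cong \mathcal{U}'$, as welds. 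Each such weld joins triangulations combinatorially isomorphic to the flag-like $\mathcal{S}_j$, hence is itself flag-like, and the terminal triangulation is combinatorially equivalent to $\mathcal{S}$, and so to $\mathcal{T}'$.

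The whole argument thus reduces to the existence of a stratification-compatible common stellar subdivision, which I would establish by induction on the filtration length $n$. Assuming $\mathcal{T}$ and $\mathcal{S}$ have already been made to agree combinatorially on $X_{k-1}$ by stellar subdivisions, I would treat $X_k$ relative to $X_{k-1}$: the induced triangulations of $X_k$ agree on $X_{k-1}$, and a relative form of the stellar subdivision theorem should furnish a common stellar subdivision obtained by subdividing only at simplices not lying in $X_{k-1}$. Such subdivisions extend ambiently and leave the matched lower strata untouched, because a subdivision at a simplex $\sigma \not\subset X_{k-1}$ alters only simplices having $\sigma$ as a face, none of which lie in the subcomplex $X_{k-1}$. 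The hard part will be exactly this relative and ambient stellar theory in arbitrary dimension --- showing that the common refinement of each stratum can be reached by stellar subdivisions alone, rel the lower strata, while never leaving the class of flag-like triangulations --- which is where the two-dimensional argument of \cite{DPY} must be carried through in full generality, and where the combinatorics of the filtration restricted to the stars of simplices does the real work.
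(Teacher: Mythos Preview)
Your outline is essentially the paper's argument, and in particular you correctly identify the two facts that do all the work: (i) every stellar subdivision of a flag-like triangulation is again flag-like, and (ii) going up to a common stellar subdivision and then running the other sequence backwards as welds produces only flag-like moves. The paper's proof is exactly this, citing Alexander's theorem for the common stellar subdivision and nothing more.

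Where you diverge is in the last two paragraphs: you propose to \emph{construct} a ``stratification-compatible'' common stellar subdivision by induction on the filtration length, invoking a relative stellar theory on each stratum and extending ambiently. This is unnecessary. Once you have pulled $\mathcal{T}'$ back to a flag-like triangulation $\mathcal{S}$ of $X$, the classical Alexander--Newman theorem already gives a single triangulation $\mathcal{U}$ of $X$ that is simultaneously an iterated stellar subdivision of $\mathcal{T}$ and of $\mathcal{S}$. Since each $X_i$ is a subcomplex of both $\mathcal{T}$ and $\mathcal{S}$, and stellar subdivision preserves subcomplexes, every intermediate triangulation on either chain automatically has each $X_i$ as a subcomplex and is flag-like by (i). No stratum-by-stratum induction, no relative stellar theory, and no ambient extension is required; the ``hard part'' you anticipate simply does not arise. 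The paper accordingly disposes of the theorem in a few lines, noting that the argument is identical to the two-dimensional case in \cite{DPY} rather than a genuine generalization of it.
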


\begin{proof}
The proof is identical to that given in the special case in \cite{DPY}:  Sufficiency is clear, necessity follows from the full force of Alexander's theorem -- that two complexes are PL homoemorphic if and only if there is a common subdivision by Alexander subdivisions --  and the observation that subdivisions of flag-like triangulations are always flag-like:  starting at $\mathcal T$ and ${\mathcal T}^\prime$, apply Alexander subdivisions (necessarily flag-like) until a common combinatorial type of triangulation is reached.  The sequence in the theorem is then the sequence of subdivsions beginning at $\mathscr T$ followed by the welds reversing the subdivisions starting at
${\mathscr T}^\prime$ in reverse order.
\end{proof}

Working with Alexander moves in the context of state-sum models is difficult, since in dimensions three and greater there are infintely many combinatorial types of Alexander move, and what is more, the number of simplexes in the subdivided triangulation is quite large.  We therefore wish to reduce PL homeorphism of our triples $K \subset \Sigma \subset M$, equipped with flag-like triangulations to flag-like instances of the extended Pachner moves of \cite{CY}.

\begin{theorem} \label{sufficient_moves}
Every flag-like Alexander move relating triangulations of a knot, Seifert surface, 3-manifold triple 
$K \subset \Sigma \subset M$ can be accomplished by a sequence of flag-like extended Pachner moves.

Every flag-like Alexander move relating triangulation of a knot, Seifert surface, 3-manifold triple $K \subset \Sigma \subset M$ can be accomplished by a sequence of flag-like moves of the following types:
Pachner moves on tetrahedra, extended Pachner moves on triangles of the Seifert surface, and Alexander subdivision of an edge of the knot whose link has exactly three edges (a 3-6 move).
\end{theorem}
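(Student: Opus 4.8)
The plan is to reduce the statement to the case of stellar subdivisions and then to organize the argument by the stratum in which the newly introduced vertex lies. Since a stellar weld is by definition the inverse of a stellar subdivision, and the inverse of a flag-like move is again flag-like (its two states are merely interchanged), it suffices to realize every flag-like stellar subdivision by a sequence of the listed moves; the associated weld is then realized by running that sequence backwards, each move being invertible. Stellar subdivision of a $0$-simplex is trivial, so the vertex $\hat\sigma$ introduced by subdividing a simplex $\sigma \subset X_j$ of dimension $\geq 1$ lies in the relative interior of $\sigma$; flag-likeness forces $X_{j-1}\cap\sigma$ to be a proper face of $\sigma$, so $\hat\sigma$ lies in exactly one of the three open strata $M\setminus\Sigma$, $\Sigma\setminus K$, or $K$. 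This trichotomy dictates the required move: a new vertex in $M\setminus\Sigma$ can only be produced by an ordinary Pachner move on tetrahedra, a new vertex in $\Sigma\setminus K$ only by an extended Pachner move on a triangle of $\Sigma$, and a new vertex on $K$ only by a move intrinsic to the knot, namely the 3-6 move. This simultaneously explains the shape of the list and shows the three move-types to be genuinely independent, since no bulk or surface move can place a vertex on $K$ and no bulk move can place one on $\Sigma$.

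The main technical device is \emph{link reduction}. To subdivide a $k$-simplex $\sigma$ I would first bring $\mathrm{lk}(\sigma)$ to its minimal combinatorial form, holding fixed the finitely many marked points at which the higher strata cross it and keeping the triangulation flag-like at every stage; once the link is minimal the subdivision of $\sigma$ is a single primitive move. For a tetrahedron the link is already a pair of points and the subdivision is just the 1-4 move. For an edge in $M\setminus\Sigma$ the link is a circle meeting no higher stratum, which I reduce to three edges by 2-3 moves and then subdivide; since this introduces a bulk vertex, the whole process is carried out by Pachner moves on tetrahedra, exactly as in the classical reduction of stellar subdivisions to bistellar moves. For a simplex of $\Sigma$ (a surface triangle, or a surface edge not in $K$) the link carries one or two marked vertices recording $\Sigma\cap\mathrm{lk}(\sigma)$, and both the reduction and the final subdivision are performed by the two-dimensional Pachner moves on $\Sigma$ thickened into $M$, that is, by extended Pachner moves on triangles of $\Sigma$.

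The crux is an edge $e\subset K$. Its link in $M$ is a circle, and because $e$ is a boundary edge of $\Sigma$ the surface meets this circle in the single vertex $v$ with $\Sigma\cap\mathrm{st}(e)=e\ast v$, while $K$ meets the link nowhere. I would reduce the link circle to exactly three edges by applying 2-3 moves at link vertices other than $v$: such a move deletes one vertex $w\neq v$, and the two tetrahedra $e\ast\{w,a\}$ and $e\ast\{w,b\}$ meet $\Sigma$ only along $e$, so it is a bulk Pachner move that neither touches $\Sigma$ away from $e$ nor disturbs the flag at $v$, and flag-likeness is preserved. When the link has three edges with $v$ among its vertices, a single 3-6 move subdivides $e$, at the same time splitting the unique surface triangle $e\ast v$ into $\{e_1,\hat e,v\}$ and $\{\hat e,e_2,v\}$; this move is flag-like by hypothesis and is precisely the primitive allowed by the theorem. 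As observed above, a vertex on $K$ cannot arise from any other move, so the 3-6 move is irreducible and must appear in the list.

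The step I expect to be the main obstacle is controlling the link reductions. One must verify that each reducing 2-3 move can be carried out without its target edge $\{a,b\}$ already being present, since a Pachner move is blocked by such a coincidence; I would dispose of this by a preliminary flag-like subdivision, or an auxiliary 1-4 move introducing a fresh vertex, to separate the offending simplices before reducing, checking that this maneuver itself preserves flag-likeness. A secondary point requiring care is the behaviour at the marked vertices: near $v$ for a knot edge, or near $v_1,v_2$ for a surface edge, the reduction must switch from bulk moves to extended Pachner moves on $\Sigma$, and one must confirm that these leave the flag recorded by the crossing intact. Once these local genericity and compatibility issues are settled, assembling the reductions with the single terminal primitive yields the required sequence, and the general first statement follows from the refined second one.
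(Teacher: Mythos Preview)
Your organization by stratum and the link-reduction heuristic are natural, but there is a genuine gap at the end: the first statement does \emph{not} follow from the second. The 3-6 move on a knot edge is not among the extended Pachner moves of \cite{CY} for this stratification---the extended Pachner move on an edge of $K$ is the subdivision in the stark neighborhood with a \emph{four}-edge link (a 4-8 move), not a three-edge one. The paper therefore devotes a separate step to reducing the 3-6 move to flag-like extended Pachner moves: subdivide the unique link edge not incident with $\Sigma$ (by flag-like Pachner moves, already established for bulk edges) to produce a four-edge link, apply the 4-8 extended Pachner move, then weld the two halves of that subdivided link edge. Without this step you have proved only the second statement.

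There is also a gap in your treatment of simplices with interior in $M\setminus\Sigma$. After reducing the link of a bulk edge $e$ to three edges, the ``then subdivide'' you invoke is itself a 3-6 move, not a Pachner move; the same issue arises for a bulk triangle, whose link is already $S^0$ but whose subdivision is a 2-6 move. Moreover, each 2-3 link reduction spawns two tetrahedra outside $\mathrm{st}(e)$, so after subdividing $e$ in the reduced star you have not reached the Alexander subdivision of the \emph{original} triangulation, and you give no fix-up. The paper handles both problems by a different mechanism: instead of shrinking the link, it performs a 1-4 on every tetrahedron of $\mathrm{st}(e)$ and then a 2-3 on each formerly shared triangle, producing a new star of $e$ that meets the lower strata only in $\overline{e}$; Cassali's boundary-relative Pachner theorem \cite{C} then retriangulates this star freely (every move within it being automatically flag-like), and the exterior modifications are undone by further applications of the same theorem. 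For knot edges the paper does use your link-reduction idea, but it explicitly supplies the missing fix-up: after the 3-6 move, each edge introduced by a reducing 2-3 is subdivided and then reverse-subdivided to insert the radial edge from the new vertex on $K$.
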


\begin{proof}
It suffices to show that flag-like Alexander subdivisions can be accomplished by sequences of flag-like extended Pachner moves. 

 For subdivisions of tetrahedra, subdivisions of triangles lying in $\Sigma$ and of
edges of $K$ with regular neighborhoods as depicted below,

\begin{center}
\begin{tikzpicture}%[scale=1.0]
\draw [fill=blue,blue, opacity=.5] (1.9,-0.5) --(1.5,-1.8) -- (1.5,1.8); % cyan-suface

\draw (0,0) --(1.9,-0.5) -- (3,0.5) -- (1.5,1.8)-- (0,0);
%\draw [dashed] (0, 0) -- (3,0.5);
\draw (0,0) --(1.5,-1.8)-- (3,0.5);
\draw (1.9,-0.5)-- (1.5,-1.8);
\draw (1.5,1.8)-- (1.9,-0.5);
\draw [dashed] (0,0) -- (1.2,1.0) -- (3,0.5);
\draw [dashed] (1.5,1.8) -- (1.2,1.0) -- (1.5,-1.8);
%\draw [dashed,thick] (1.5,1.8) -- (1.5,-0.2);
%\draw [dashed,thick] (1.5,-0.6) -- (1.5,-1.8);
\draw [dashed,red,ultra thick] (1.5,1.8) -- (1.5,-0.2);
\draw [dashed,red,ultra thick] (1.5,-0.6) -- (1.5,-1.8);

\draw [blue,ultra thick] (1.5,1.8)--(1.9,-0.5)--(1.5,-1.8); % blue-line
\draw [fill=red, red] (1.5,1.8) circle (1mm); % red-dot
\draw [fill=blue, blue] (1.9,-0.5) circle (1mm); % blue-dot
\draw [fill=red, red] (1.5,-1.8) circle (1mm); % red-dot

\end{tikzpicture}
\end{center}

\noindent the first statement is immediate, since the Alexander move itself is an extended Pachner move.   Likewise the second statement is immediate for subdivisions of tetrahedra, triangles lying in $\Sigma$ and edges of $K$ with regular neighborhoods as depicted here:

\begin{center}
\begin{tikzpicture}%[scale=0.8]
\draw [fill=blue,blue, opacity=.5] (1.9,-0.5) --(1.5,-1.8) -- (1.5,1.8); % cyan-suface

\draw (0,0) --(1.9,-0.5) -- (3,0.5) -- (1.5,1.8)-- (0,0);
\draw [dashed] (0, 0) -- (3,0.5);
\draw (0,0) --(1.5,-1.8)-- (3,0.5);
\draw (1.9,-0.5)-- (1.5,-1.8);
\draw (1.5,1.8)-- (1.9,-0.5);
%\draw [dashed,thick] (1.5,1.8) -- (1.5,-0.2);
%\draw [dashed,thick] (1.5,-0.6) -- (1.5,-1.8);
\draw [dashed,red,ultra thick] (1.5,1.8) -- (1.5,-0.2); % red-line
\draw [dashed,red,ultra thick] (1.5,-0.6) -- (1.5,-1.8); % red-line

\draw [blue,ultra thick] (1.5,1.8)--(1.9,-0.5)--(1.5,-1.8); % blue-line
\draw [fill=red, red] (1.5,1.8) circle(1mm); % red-dot
\draw [fill=blue, blue] (1.9,-0.5) circle(1mm); % blue-dot
\draw [fill=red, red] (1.5,-1.8) circle(1mm); % red-dot
\end{tikzpicture}
\end{center}

For subdivisions of edges in $\Sigma$, the result follows by extending the Pachner moves
in the proof of the corresponding result in \cite{DPY} to a stark neighborhood formed by
taking the join with a point on either side of the surface.

Subdivisions of triangles with interior in $M \setminus \Sigma$ are easily seen to be accomplished
by a sequence of two flag-like Pachner moves, regardless of how the filtration restricts to the pair of tetrahedra sharing the triangular face:  first perform a 1-4 Pachner move in one of the tetrahedra, as a subdivision, of flag-like triangulation this move is necessarily flag-like.  The resulting complex
than has the face to be subdivided shared by the other of the original tetrahedra and one of those
from the subdivsion, whose interesection with lower-dimensional strata necessarily lies in the
boundary of triangle being subdivided.  The 2-3 Pachner move on this pair of tetrahedra is necessarily flag-like since the intersection with the lower dimensional strata lies entirely in the original tetrahedron, and 
performing it accomplished the desired subdivision.

The main difficulty in the proof of each statement comes from handling subdivisions of edges lying on the knot, the link of which has a different number of edges than that included in the set of moves, and subdivisions of edges with interior in the top dimensional stratum.

For the latter, we proceed as follows:  first, observe that the link of any given edge is a polygon, and the star of the edge consists of tetrahedra including an edge of the link and the given edge (as one of its pairs of disjoint edges) and triangles shared by two of the tetrahedra, having one vertex in the link and the given edge as their opposites side, and the faces of these.  As the edge has interior in the top dimensional stratum, and the triangulation is flag-like, the given edge has at most one vertex in a lower dimensional stratum (the knot or the Seifert surface), and the higher

Now, perform a 1-4  Pachner move on each tetrahedron in the star, as Alexander subdivisions of a flag-like triangulation, these moves are necessarily flag-like.  Having done this, consider the triangles which were shared by the original tetrahedra of the given edge.  They are now each shared by two tetrahedra which intersect the lower-dimensional strata only in the intersection of the triangle with the lower-dimensional -- by flag-likeness this must be empty, a single vertex (in $\Sigma$ or in $K$) or an edge with its end-points (all in $\Sigma$, all in $K$, or in $\Sigma$, except for a vertex in $K$) -- and thus the entire star of each of the triangles in the new triangulation intersects the lower-dimensional strata only in the empty set or the same vertex or edge.  It is immediate then that the 2-3 Pachner move on the star of each triangle is flag-like.

Now, perform all of the 2-3 Pachner moves on the stars of the triangles shared by the original tetrahedra.
The link of the given edge in this new triangulation has as its vertices exactly the new vertices introduced by the 1-4 moves of the first step, and as its edges precisely the new edges introduced by the 2-3 moves.  It is easy to see that the intersection of the star of the given edge in this new triangulation with the lower-dimensional strata is exactly the intersection of (the closure of) the given edge, that is, either empty, or a single vertex in either $\Sigma$ or $K$.  In any event, any Pachner moves performed within the star of the given edge in the last triangulation are necessarily flag-like.  By Cassali's \cite{C} improvement of Pachner's result, we can reach any triangulation of the star (as a manifold with boundary) with the same triangulation of the boundary by a squence of Pachner moves on the interior of the star, in particular we can perform the Alexander subdivision of the edge by a sequence of such moves.

Having subdivided the given edge in the modified triangulation, we must now apply more flag-like Pachner moves to obtain the Alexander subdivision of the given edge {\em in the original triangulation}.  Now, consider the cells in the most recently constructed triangulation within which we had performed the 2-3 Pachner moves.  These cells are still cells of the latest triangulation, and still intersect the lower-dimensional strata in such a way that any moves performed in their interiors are necessarily flag-like, and are still each the star of and edge introduced by one of the 2-3 moves, but this star now consists of four closed tetrahedra all sharing the edge -- being in particular the join of a pair of triangles sharing the edge with two vertices.  By Cassali \cite{C} we can perform the join with the two vertices of a 2-dimensional 2-2 Pachner move removing the edge introduced by the 2-3 move.

Now, observe that the last constructed triangulation is a proper subdivision the triangulation we had after applying the 1-4 moves (and thus of the original triangulation), and, moreover, in it, the triangle shared by the tetrahedra of the original star have been divided into two triangles, exactly as in the result of the desired Alexander move.   Applying Cassali's main theorem \cite{C} to the subdivision of each of the original tetrahedra (any moves within which much be flag-like) then obtains the desired triangulation by a sequence of flag-like Pachner moves.

Finally, we must reduce all Alexander subdivisions of an edge of the knot to flag-like extended Pachner moves on simplexes not on the knot, together with the subdivion of an edge of the knot with a four (resp. three) edge link to establish the first (resp. second) statement.  

For the second statement, consider first the case of reducing the Alexander subdivision of the edge in the first picture above to flag-like extended Pachner moves on simplexes not on the knot, together with the subdivion of an edge of the knot with a four three edge link.  The star of the edge to be subdivided is show below.

\begin{center}
\begin{tikzpicture}%[scale=1.5]
		\fill[blue,blue, opacity=.5] (2,0)--(1.7,3)--(1.7,-2);
\node[circle, label=left:{\small A}, fill, inner sep=.8pt, outer sep=0pt] (A) at (0,0){};
	\node[circle, label=right:{\small B}, fill, inner sep=.8pt, outer sep=0pt] (B) at (2,0){};
	\node[circle, label=right:{\small C}, fill, inner sep=.8pt, outer sep=0pt] (C) at (3,1.2){};
	\node[circle, label=left:{\small D}, fill, inner sep=.8pt, outer sep=0pt] (D) at (1,1.2){};
	\node[circle, label=above:{\small E}, fill, inner sep=.8pt, outer sep=0pt] (E) at (1.7,3){};
	\node[circle, label=below:{\small F}, fill, inner sep=.8pt, outer sep=0pt] (F) at (1.7,-2){};
	\node[circle, fill, inner sep=.8pt, outer sep=0pt] (G) at (1.7,0.3){};
	\begin{scope}
		\draw (A)--(B)--(C);
		\draw [dashed] (A)--(D)--(C);
		\draw (A)--(E)--(B);
		\draw (E)--(C);
		\draw [dashed] (D)--(E);
		\draw (A)--(F)--(B);
		\draw (F)--(C);
		\draw [dashed] (D)--(F);
		\draw [dashed, thick,red] (E)--(G);
		\draw [dashed, thick,red] (1.7,-0.2)--(F);
		\fill[red, opacity=.5] (E) circle (3pt);
		\fill[red, opacity=.5] (F) circle (3pt);
		\draw[thick,blue,blue] (E)--(B)--(F);
	\end{scope}
\end{tikzpicture}
\end{center}

First, apply a 2-3 Pachner move to two tetrahedra $AFED$ and $DFCE$, sharing the triangle $DEF$.  Observe that this move is flag-like.  Giving the triangulation given within the original star below.

\begin{center}
\begin{tikzpicture}%[scale=0.8]
		\fill[blue,blue, opacity=.5] (2,0)--(1.7,3)--(1.7,-2);
\node[circle, label=left:{\small A}, fill, inner sep=.8pt, outer sep=0pt] (A) at (0,0){};
	\node[circle, label=right:{\small B}, fill, inner sep=.8pt, outer sep=0pt] (B) at (2,0){};
	\node[circle, label=right:{\small C}, fill, inner sep=.8pt, outer sep=0pt] (C) at (3,1.2){};
	\node[circle, label=left:{\small D}, fill, inner sep=.8pt, outer sep=0pt] (D) at (1,1.2){};
	\node[circle, label=above:{\small E}, fill, inner sep=.8pt, outer sep=0pt] (E) at (1.7,3){};
	\node[circle, label=below:{\small F}, fill, inner sep=.8pt, outer sep=0pt] (F) at (1.7,-2){};
	\node[circle, fill, inner sep=.8pt, outer sep=0pt] (G) at (1.7,0.3){};
	\begin{scope}
		\draw (A)--(B)--(C);
		\draw [dashed] (A)--(D)--(C);
		\draw (A)--(C);
		\draw (A)--(E)--(B);
		\draw (E)--(C);
		\draw [dashed] (D)--(E);
		\draw (A)--(F)--(B);
		\draw (F)--(C);
		\draw [dashed] (D)--(F);
		\draw [dashed, thick,red] (E)--(G);
		\draw [dashed, thick,red] (1.7,-0.2)--(F);
		\fill[red, opacity=.5] (E) circle (3pt);
		\fill[red, opacity=.5] (F) circle (3pt);
		\draw[thick,blue,blue] (E)--(B)--(F);
	\end{scope}
\end{tikzpicture}
\end{center}

Then we can apply 3-6 move to the new star of the edge EF.

\begin{center}
\begin{tikzpicture}%[scale=0.8]
		\fill[blue,blue, opacity=.5] (2,0)--(1.7,3)--(1.7,-2);
\node[circle, label=left:{\small A}, fill, inner sep=.8pt, outer sep=0pt] (A) at (0,0){};
	\node[circle, label=right:{\small B}, fill, inner sep=.8pt, outer sep=0pt] (B) at (2,0){};
	\node[circle, label=right:{\small C}, fill, inner sep=.8pt, outer sep=0pt] (C) at (3,1.2){};
	\node[circle, label=left:{\small D}, fill, inner sep=.8pt, outer sep=0pt] (D) at (1,1.2){};
	\node[circle, label=above:{\small E}, fill, inner sep=.8pt, outer sep=0pt] (E) at (1.7,3){};
	\node[circle, label=below:{\small F}, fill, inner sep=.8pt, outer sep=0pt] (F) at (1.7,-2){};
	\node[circle, label=below:{\tiny G}, fill, inner sep=.8pt, outer sep=0pt] (G) at (1.7,0.3){};
	\begin{scope}
		\draw (A)--(B)--(C);
		\draw [dashed] (A)--(D)--(C);
		\draw (A)--(C);
		\draw (A)--(E)--(B);
		\draw (E)--(C);
		\draw [dashed] (D)--(E);
		\draw (A)--(F)--(B);
		\draw (F)--(C);
		\draw [dashed] (D)--(F);
		\draw [dashed, thick,red] (E)--(G);
		\draw [dashed, thick,red] (1.7,-0.2)--(F);
		\draw [thick] (A)--(G);
		\draw [thick] (B)--(G);
		\draw [thick] (C)--(G);
		\fill[red, opacity=.5] (E) circle (3pt);
		\fill[red, opacity=.5] (F) circle (3pt);
		\draw[thick,blue,blue] (E)--(B)--(F);
	\end{scope}
\end{tikzpicture}
\end{center}

Subdividing the edge AC using flag-like Pachner moves as in the argument above, then reversing a like sequence of flag-like Pachner moves to introduce an edge DG then gives the desired final configuration shown below.

\begin{center}
\begin{tikzpicture}%[scale=0.8]
		\fill[blue,blue, opacity=.5] (2,0)--(1.7,3)--(1.7,-2);
\node[circle, label=left:{\small A}, fill, inner sep=.8pt, outer sep=0pt] (A) at (0,0){};
	\node[circle, label=right:{\small B}, fill, inner sep=.8pt, outer sep=0pt] (B) at (2,0){};
	\node[circle, label=right:{\small C}, fill, inner sep=.8pt, outer sep=0pt] (C) at (3,1.2){};
	\node[circle, label=left:{\small D}, fill, inner sep=.8pt, outer sep=0pt] (D) at (1,1.2){};
	\node[circle, label=above:{\small E}, fill, inner sep=.8pt, outer sep=0pt] (E) at (1.7,3){};
	\node[circle, label=below:{\small F}, fill, inner sep=.8pt, outer sep=0pt] (F) at (1.7,-2){};
	\node[circle, label=below:{\tiny G}, fill, inner sep=.8pt, outer sep=0pt] (G) at (1.7,0.53){};
	\begin{scope}
		\draw (A)--(B)--(C);
		\draw [dashed] (A)--(D)--(C);
		\draw (A)--(E)--(B);
		\draw (E)--(C);
		\draw [dashed] (D)--(E);
		\draw (A)--(F)--(B);
		\draw (F)--(C);
		\draw [dashed] (D)--(F);
		\draw [dashed, thick,red] (E)--(G);
		\draw [dashed, thick,red] (1.7,-0.2)--(F);
		\draw [thick] (A)--(G);
		\draw [thick] (B)--(G);
		\draw [thick] (C)--(G);
		\draw [thick] (D)--(G);
		\fill[red, opacity=.5] (E) circle (3pt);
		\fill[red, opacity=.5] (F) circle (3pt);
		\draw[thick,blue,blue] (E)--(B)--(F);
	\end{scope}
\end{tikzpicture}
\end{center}

Now for edges of the knot with links of greater than four edges, the number of edges in the link can be reduced by a 2-3 move, as was done in the initial step of the construction just completed, and after the application of the 3-6 move, a sequence of pairs of a subdivision of an edge introduced by a 2-3 move, followed by a reverse subdivision to introduce a radial edge from the subdivision point introduced by the 3-6 move, will accomplish the desired Alexander subdivision.   Thus we establish the second statement.  The same argument will also reduce Alexander subdivisions of edges on the knot with links of more than four edges to sequences of flag-like extended Pachner moves.  

To establish the first statement, it thus remains only to show that the 3-6 move can be accomplished by a sequence of flag-like extended Pachner moves.  Consider an edge of the knot with a three edge link.  The link consists of two edges incident with the Seifert surface and one not incident with the Seifert surface.  As shown above, the edge not incident with the Seifert surface can be subdivided by a sequence of flag-like Pachner moves.   Having done this, the edge of the knot now has a 4 edge link, and can be subdivided by the extended Pachner move (4-8 move).  It remains then to weld two edges which had been the edge of the link not incident with the Seifert surface.  This, however, is the reverse of a subdivision of an edge not incident with either the knot or the Seifert surface, so as shown above can be done by a sequence of flag-like Pachner moves, thus establishing the result. 
\end{proof}

\section{Initial Data:  Untwisted Case}

The initial data describing local states in Wakui's construction \cite{W} of  state-sum for finite-gauge group Dikjkgraaf-Witten theory is a finite group, $G$, together with a 3-cocycle $\alpha:G^3 \rightarrow K^\times$ valued in the multiplicative group of a field $K$.

In \cite{DPY}, 2-dimensional Dijgraaf-Witten theory was extended to pairs $C\subset \Sigma$ of a closed curve lying in a surface, using local states given by a triple $(H,X,G)$ of two groups $H$ and $G$ and a set $X$ with commuting actions of the groups on the left and right, respectively, together with $K^\times$-valued coefficients extending a 2-cocycle on $G$ and satisfying 2-cocycle-like conditions.  

For now we defer consideration of the analogue of a cocycle and describe the initial data sufficient for the ``untwisted'' construction, which will turn out to be equivalent to the construction involving an analogue of a cocycle which is identically $1$.

Brief consideration of the conditions on the triple $(H,X,G)$ show that the data can be regarded as a two-object category in which the elements of $H$ are the endomorphisms of one object, which we denote 1, the elements of $G$ the endomorphisms of the other, which we denote 2, there are no arrows from 2 to 1 and the elements of $X$ are the arrow from 1 to 2 -- the commutativity of the actions being part of the associativity of composition in the category.

\begin{definition} A $\mathcal P$-{\em  parcel} is a small category $\mathcal C$ equipped with a surjective conservative functor $U:{\mathcal C}\rightarrow {\mathcal P}$, which is injective on objects, to a poset $\mathcal P$.
\end{definition}

A finite $\mathsf 3$-parcel, where $\mathsf 3$ is the three-element chain $1 < 2 < 3$, will be the initial data needed for the untwisted construction, and part of the data needed for the general construction.  Throughout we will denote an object of a $\mathsf 3$-parcel by the number which is its image under the functor $U$.

\section{The State-Sum Construction:  Untwisted Case}

We now describe a state-sum invariant of knot, Seifert surface, 3-manifold triples (which by general principles, cf. \cite{Y}, can be seen as arising from a generalized TQFT given by a functor on cobordisms of surfaces marked with curves with boundary), using only a finite $\mathsf 3$-parcel as initial data.

\begin{definition}
Let $\mathcal T$ be a flag-like triangulation of $K \subset \Sigma \subset M$ for a knot (or link) $K$ with a Seifert surface $\Sigma$ in a 3-manifold $M$.    The $\Gamma({\mathcal T})$, {\em directed graph} of $\mathcal T$, is the directed graph with ${\mathcal T}_0$, the set of vertices of $\mathcal T$ as vertices, and an edge $(v,w)$ whenever $v$ and $w$ are vertices connected by an edge of $\mathcal T$ and $\dim(v) \leq \dim(w)$, where the dimension of a vertex is the dimension of the stratum in which it lies.  ${\mathcal C}({\mathcal T})$, the category of $\mathcal T$, is then the quotient of the path category of $\Gamma({\mathcal T})$ by the relations $(v,w)(w,v) = Id_v$ whenever there are edges in both directions between two vertices, and $(v,w)(w,x) = (v,x)$ whenever there is a 2-simplex of $\mathcal T$ with vertices $v,w,$ and $x$.  ${\mathcal C}({\mathcal T})$ then has a surjective conservative functor $D$ to $\mathsf 3$, given on object by mapping each vertex to the dimension of the stratum in which it lies.
\end{definition}

\begin{definition} Given a $\mathsf 3$-parcel, $U:{\mathcal C}\rightarrow {\mathsf 3}$, a $\mathcal C$-coloring of a flag-like triangulation $\mathcal T$ of a knot-Seifert surface-3-manifold triple $K \subset \Sigma \subset M$ is a functor $\sigma:{\mathcal C}({\mathcal T}) \rightarrow {\mathcal C}$ such that $U(\sigma) = D$.
\end{definition}

\begin{theorem}
\label{untwisted} Let $K \subset \Sigma \subset M$ be a knot, Seifert surface, 3-manifold triple,  $\mathcal T$ a flag-like triangulation, and $U:{\mathcal C}\rightarrow {\mathsf 3}$ a $\mathsf 3$-parcel.  Let $[\mathcal T, \mathcal C]$ denote the set of $\mathcal C$-clorings of $\mathcal T$, and $G_i = {\mathcal C}(i,i)$ for $i = 1, 2,3$, denote the endomorphism groups of $\mathcal C$.  And finally let $\mathcal T_0^i$ for $i = 1,2,3$ denote the set of vertices of $\mathcal T$ lying in the stratum of dimension $i$.  Then, using $\#$ to denote cardinality, the quantity

\[ \frac{ \#[\mathcal T, \mathcal C] } { \prod_{i=1}^3 \#G_i^{\#T_0^i} } \]

is independent of the triangulation and thus gives a topological invariant of the triple.
\end{theorem}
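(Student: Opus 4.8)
The plan is to prove invariance of the ratio under the combinatorial moves that, by Section 2, suffice to realize every stratification-preserving PL homeomorphism. By the first theorem of Section 2 (reducing such homeomorphisms to flag-like Alexander moves) together with Theorem~\ref{sufficient_moves}, it is enough to show that the quantity is unchanged by each flag-like extended Pachner move appearing in the second statement of Theorem~\ref{sufficient_moves}: the Pachner moves on tetrahedra, the extended Pachner moves on triangles of $\Sigma$, and the $3$-$6$ move on an edge of $K$. I would sort these into two classes, those that change the vertex set by adding or deleting a single vertex in the interior of a fixed stratum, and those that preserve the vertex set. Two structural remarks frame everything. First, since $U\sigma = D$ forces a coloring on objects, a $\mathcal C$-coloring is exactly an assignment of a morphism $\sigma(v,w)\colon D(v)\to D(w)$ to each directed edge $(v,w)$ of $\Gamma(\mathcal T)$ satisfying the identity and triangle (flatness) relations of $\mathcal C(\mathcal T)$. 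Second, the denominator is the order of the gauge group $\mathcal G=\prod_{v\in\mathcal T_0}G_{D(v)}=\prod_{i=1}^3 G_i^{\#\mathcal T_0^i}$, which acts on colorings by $\sigma(v,w)\mapsto g_v^{-1}\sigma(v,w)g_w$ (well defined because $g_v\in\mathcal C(D(v),D(v))$ and $U$ is conservative, and composition-preserving because it is a gauge transformation). Thus the quantity to be shown invariant is $\#[\mathcal T,\mathcal C]/\#\mathcal G$, and it suffices to show that each move multiplies numerator and denominator by the same factor.

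The heart of the argument is a counting lemma for the vertex-adding moves: if a flag-like move introduces a single new vertex $v_0$ in the interior of the stratum of dimension $i$, then each coloring of the complement extends in exactly $\#G_i$ ways, matching the factor $\#G_i$ gained by $\mathcal G$ as $\#\mathcal T_0^i$ increases by one. I would prove this by fixing the given coloring on the link of $v_0$ and showing the extensions form a torsor under the subgroup $G_i\le\mathcal G$ of gauge transformations supported at $v_0$, which acts on the value $\sigma(v_0,w_\ast)$ (for $w_\ast$ a link vertex of dimension $i$) by left translation. Nonemptiness and freeness rest on two points. The link of $v_0$ is a $2$-sphere (or a disk on $\partial M$), hence simply connected, so the holonomy of the given flat coloring along the link is path-independent, every loop being a product of conjugates of triangle boundaries whose holonomy is trivial by the old flatness relations; an extension may therefore be propagated consistently from a single new edge. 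Crucially, flag-likeness guarantees that the subdivided cell meets the stratum of dimension $i$ in a simplex with a vertex of dimension exactly $i$ (a tetrahedron cannot have all four vertices on $\Sigma$; a triangle of $\Sigma$ cannot have all vertices on $K$; and for the $3$-$6$ move the endpoints of the subdivided edge of $K$ already lie in stratum $1$). This vertex $w_\ast$ supplies the free parameter $\sigma(v_0,w_\ast)\in G_i$, an honest endomorphism group rather than a general hom-set, which is exactly what forces the extension count to equal $\#G_i$.

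For the vertex-preserving moves—the $2$-$3$ move and the $2$-$2$ flips occurring inside the extended moves on $\Sigma$—the vertex set and hence $\mathcal G$ are unchanged, so I would exhibit a bijection between the colorings of the two triangulations. The new edge created by the move (for instance $DE$ in a $2$-$3$ move replacing the tetrahedra on the face $ABC$) is assigned the morphism forced by the new triangle relations, and associativity of composition in $\mathcal C$—which is precisely the untwisted cocycle condition—guarantees both that the several relations defining it agree and that the relation carried by the removed face is recovered. The inverse map forgets the new edge, and the two constructions are mutually inverse, so the coloring counts coincide.

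The main obstacle I anticipate is the bookkeeping that keeps the parcel grading consistent throughout. One must verify, move by move, that flag-likeness makes the dimensions of the vertices along each simplex a chain in $\mathsf 3$, so that every edge of $\Gamma(\mathcal T)$ is coherently directed and every composite in a triangle relation or in the definition of a new edge lands in the correct hom-set of $\mathcal C$; only then is the torsor of the counting lemma an action of a genuine endomorphism group $G_i$, and only then do the three candidate values of $\sigma(D,E)$ in the $2$-$3$ move literally coincide. Confirming the existence of the same-stratum link vertex and the coherence of these compositions for each move type is the delicate, if elementary, core of the proof; once it is in place, everything reduces to associativity in $\mathcal C$ and the torsor count above.
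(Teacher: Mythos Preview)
Your argument is correct, but it takes a longer route than the paper's own proof. The paper does \emph{not} invoke Theorem~\ref{sufficient_moves} here; it works directly with flag-like Alexander moves, explicitly remarking that the extended Pachner moves are reserved for the twisted case. Since every Alexander subdivision introduces exactly one new vertex lying in some stratum of dimension $i$, there is only a single type of move to analyze: the paper observes that choosing an edge from the new vertex to an adjacent vertex in the same stratum induces an equivalence $\mathcal C(\mathcal T)\simeq\mathcal C(\mathcal T')$ over $\mathsf 3$, and that the choice of an element of $G_i$ as the image of that edge determines the extension uniquely, so $\#[\mathcal T',\mathcal C]=\#G_i\cdot\#[\mathcal T,\mathcal C]$. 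This is your torsor lemma, but it is the whole proof---no vertex-preserving moves ever arise. By going through the extended Pachner moves you are obliged to handle the $2$--$3$ and $4$--$4$ moves separately via an explicit bijection of colorings, which is unnecessary extra work in the untwisted setting but is precisely the decomposition one needs for Theorem~\ref{main}, where the Alexander-move shortcut is unavailable because the partial-cocycle conditions are phrased tetrahedron by tetrahedron.
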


\begin{proof}
We prove this theorem using Alexander moves.  The extended Pachner moves of section 2 will become important for the twisted case.

Whenever an Alexander subdivision is performed, a single new vertex is introduced.  Chosing an edge from the new vertex to an adjacent vertex in the same stratum induces an equivalence between the category of the old triangulation and the category of the new triangulation, by functors commuting with the underlying functors to $\mathsf 3$.  A $\mathcal C$-coloring of the old triangulation determines most of a $\mathcal C$-coloring of the new -- all that remains to be specified are the images of the arrows named by edges incident with the new vertex.  A brief consideration shows that the choice of an element of $G_i$ (where $i$ is the dimension of the stratum in which the new vertex lies) to be the image of the edge chosen to induce the equivalence of categories completely determines a unique $\mathcal G$-coloring of the new triangulation, and thus whenever a simplex in the $i$-dimensional stratum is subdivided, the number of $\mathcal G$-colorings of the new triangulation is exactly $\#G_i$ times the number of $\mathcal G$-colorings of the old triangulation, thus establishing the result.
\end{proof}

As with the invariants of $n$-manifolds given by untwisted Dikjgraaf-Witten theory, which for connected $n$-manifolds count group homomorphisms from the fundamental group to the guage group, and more generally count groupoid homomorphisms from any skeleton of the fundamental groupoid to the guage group, the invariants given by Theorem \ref{untwisted} admit a counting interpretation.

Observe that the relation $x \preceq y$ on points of $M$ given by $x \preceq y$ if $\dim(x) \leq \dim(y)$, where the dimension of a point is the dimension of the stratum in which it lies, is a preorder, whose restriction to each stratum is chaotic.  Taking the family of directed paths in $M$ to be the continuous maps $p:[0,1]\rightarrow M$ which are non-decreasing with respect to the usual order on $[0,1]$ and the preorder on $M$, $M$ is given the structure of a directed topological space, or $d$-space in the sense of Grandis \cite{Gr1, Gr2}.  It is easy to establish

\begin{theorem}  The fundamental category $\uparrow\!\!\Pi_1(X)$ with respect to the $d$-space structure of the previous paragraph has an underlying functor  to $\mathsf 3$ given on objects by $x \mapsto \dim(x)$, and $\uparrow\!\!\pi_1(X)$ is any skelton of $\uparrow\!\!\Pi_1(X)$, the invariant of Theorem \ref{untwisted} counts the number of functors from $\uparrow\!\!\pi_1(X)$ to $\mathcal C$, which commute with the underlying functors to $\mathsf 3$.
\end{theorem}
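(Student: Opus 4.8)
The plan is to reduce the statement to a purely combinatorial identity by identifying the fundamental category (restricted to the vertices of $\mathcal{T}$) with the edge-path category $\mathcal{C}(\mathcal{T})$, and then carrying out the count by comparing $\mathcal{C}(\mathcal{T})$ with a skeleton. The underlying functor is the easy part: a directed path $p:[0,1]\to X$ is by definition non-decreasing for the preorder $\preceq$, so $\dim(p(s))\le\dim(p(t))$ whenever $s\le t$, and in particular $\dim(p(0))\le\dim(p(1))$. Since $\mathsf{3}$ is a poset, there is a unique arrow $\dim(p(0))\to\dim(p(1))$ exactly when this inequality holds; sending the directed-homotopy class of $p$ to that arrow is well defined (homotopies fix endpoints) and functorial (composition in a poset is forced, and monotonicity is preserved under concatenation). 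This gives $\uparrow\!\!\Pi_1(X)\to\mathsf{3}$, which restricts to any skeleton $\uparrow\!\!\pi_1(X)$.

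The substantive step — and the one I expect to be the \emph{main obstacle} — is a directed edge-path (van Kampen type) theorem: the full subcategory of $\uparrow\!\!\Pi_1(X)$ on the vertices of $\mathcal{T}$ is isomorphic, over $\mathsf{3}$, to $\mathcal{C}(\mathcal{T})$, and this subcategory is equivalent to all of $\uparrow\!\!\Pi_1(X)$ since every point is joined, within its own stratum, by a directed path to a vertex; hence the two categories share a skeleton. I would adapt the classical edge-path argument to the directed setting in two parts: (i) every directed path is directed-homotopic rel endpoints to a dimension-monotone edge-path, by a simplicial-approximation/straightening argument carried out inside stark neighborhoods; and (ii) the defining relations of $\mathcal{C}(\mathcal{T})$ — inversion of same-stratum edges and the triangle relations — generate precisely the directed homotopies, no more and no fewer. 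Flag-likeness is exactly what makes (i) and (ii) compatible with the stratification: because the filtration restricts to a flag on every simplex (and on every simplex of a stark neighborhood, as observed after the definition of flag-likeness), no straightening is ever forced to descend in dimension, and each triangle relation identifies only edge-paths lying over a common arrow of $\mathsf{3}$. The delicate point is that passing from groupoids to the non-invertible directed setting must introduce no spurious identifications and omit none, i.e. the $2$-cells realizing a directed homotopy can themselves be chosen directed and triangulated compatibly with the strata; here I would lean on the stark-neighborhood structure guaranteed by stark stratification.

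Granting the identification, the count is bookkeeping via the skeleton. Realize $\uparrow\!\!\pi_1(X)$ as a skeleton $\mathcal{A}_0$ of $\mathcal{A}:=\mathcal{C}(\mathcal{T})$ over $\mathsf{3}$; its objects are one chosen vertex per connected component of each stratum, so it has $c_i$ objects in dimension $i$, where $c_i$ is the number of components of the $i$-stratum. Choose for each vertex $v$ an isomorphism $\theta_v\colon v\to r(v)$ to its representative, with $\theta_v=\mathrm{id}$ when $v\in\mathcal{A}_0$. A functor $\sigma\colon\mathcal{A}\to\mathcal{C}$ over $\mathsf{3}$ is forced on objects ($\sigma(v)=\dim v$) and is then determined by $\sigma|_{\mathcal{A}_0}$ together with the images $\sigma(\theta_v)$; since $\theta_v$ lies over $\mathrm{id}_{\dim v}$, each $\sigma(\theta_v)$ is an element of $G_{\dim v}$, and it is free precisely for $v\notin\mathcal{A}_0$. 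Conversely, any such data assembles into a functor via $\sigma(f)=\sigma(\theta_v)\,\sigma|_{\mathcal{A}_0}(\bar f)\,\sigma(\theta_w)^{-1}$ (diagrammatic order) for $f\colon v\to w$, where $\bar f=\theta_v^{-1}f\theta_w\in\mathcal{A}_0$. Hence $\#[\mathcal{T},\mathcal{C}]$ equals the number of functors $\uparrow\!\!\pi_1(X)\to\mathcal{C}$ over $\mathsf{3}$ times $\prod_i \#G_i^{\#\mathcal{T}_0^i-c_i}$, so dividing by $\prod_i \#G_i^{\#\mathcal{T}_0^i}$ recovers the number of such functors, up to the global factor $\prod_i \#G_i^{c_i}$ coming from the residual gauge symmetry of each stratum-component. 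This is the counting interpretation in precisely the same normalized sense in which the classical untwisted theory equals $\#\mathrm{Hom}(\pi_1,G)/\#G$, rather than $\#\mathrm{Hom}(\pi_1,G)$, for a connected manifold; I would make this normalization explicit in the write-up so that the statement agrees with Theorem~\ref{untwisted} on the nose.
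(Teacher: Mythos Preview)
The paper gives no proof of this theorem whatsoever: it is introduced with the phrase ``It is easy to establish'' and left at that. Your proposal therefore cannot be compared against a paper proof, because there is none. What you have written is a correct and reasonably detailed outline of the standard argument one would expect here: construct the underlying functor to $\mathsf 3$ from monotonicity of directed paths, identify $\mathcal{C}(\mathcal{T})$ with the full subcategory of $\uparrow\!\!\Pi_1(X)$ on the vertices via a directed edge-path theorem, and then count colorings against functors from a skeleton by the usual spanning-tree/gauge-fixing bookkeeping. You are right to flag the directed van Kampen step as the place where genuine work is hidden; the paper simply does not address it.

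You have also caught a real imprecision in the paper's statement. Your computation shows that the invariant of Theorem~\ref{untwisted} equals the number of functors $\uparrow\!\!\pi_1(X)\to\mathcal{C}$ over $\mathsf 3$ \emph{divided by} $\prod_i \#G_i^{c_i}$, where $c_i$ is the number of components of the $i$-dimensional stratum, not the raw count. This is exactly parallel to the classical case the paper invokes in the preceding paragraph, where the untwisted Dijkgraaf--Witten invariant of a connected $n$-manifold is $\#\mathrm{Hom}(\pi_1,G)/\#G$ rather than $\#\mathrm{Hom}(\pi_1,G)$; the paper is using ``counts'' in that same loose, normalized sense. Your suggestion to make the normalization explicit is well taken and would improve on the paper's formulation.
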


\section{Partial Cocycles}

We now turn to the matter of introducing local coefficients which generalize the group 3-cocycle in Wakui's construction \cite{W}.

The obvious generalization of a $K^\times$-valued group 3-cocycle to a category -- a function from composable triples of arrows in $\mathcal C$ to $K^\times$, satisfying the group 3-cocycle condition {\em mutatis mutandis} for every composable quadruple of arrows -- will give an instance of the rest of the necessary data.  However, a moment's thought reveals that the values of a cocycle on composable triples for which the target of the composition is the object $1$ or $2$ will be irrelevant to the construction, since they cannot occur as labels of a tetrahedron, and, in fact weaker conditions on the values on triples whose composition has target $3$ suffice.  The analogue of Wakui's 3-cocycles in our construction is given by a function on the composable triples of arrows which can occur as labels of the long oriented path in a tetrahedron, satisfying equations corresponding to the combinatorial moves that suffice to ensure topological invariance:

\begin{definition} A {\em $K^\times$-valued partial 3-cocycle} on a $\mathsf 3$-parcel $U:{\mathcal C}\rightarrow {\mathsf 3}$ is a function 
\[ \alpha: \{ (f,g,h) | f,g,h \in Arr({\mathcal C}), \; fgh \; \mbox{\rm is defined,}\;  t(h) = 3, \; \mbox{\rm and } s(h) \neq 1\; \} \rightarrow K^\times \]

\noindent for some field $K$, satisfying

\begin{enumerate}
\item $\alpha_{j,k,3,3}(g,h,k)\cdot\alpha_{i,k,3,3}^{-1}(fg,h,k)\cdot\alpha_{i,j,3,3}(f,gh,k)\cdot$ \\ \hspace*{1cm} $\alpha_{i,j,k,3}^{-1}(f,g,hk)\cdot\alpha_{i,j,k,3}(f,g,h) = 1$, whenever $k \geq 2$, and \\ \hspace*{1cm} $f,g,h,k$ is composable.

\item $ \alpha_{i , j , 2 , 3} ( a , b c , d )   \cdot  \alpha_{j , 2 , 2 , 3} ( b , c , d )  \cdot \alpha^{-1}_{i , j , 2 , 3} ( a , b , c d )   \cdot  \alpha^{-1}_{i , 2 , 2 , 3} ( a b , c , d )    = $ \\
\hspace*{1cm}$ \alpha_{i , j , 2 , 3} ( a , b c , e )  \cdot  \alpha_{j , 2 , 2 , 3} ( b , c , e ) \cdot  \alpha^{-1}_{i , j , 2 , 3} ( a , b , c e )  \cdot \alpha^{-1}_{i , 2 , 2 , 3} ( a b , c , e ), $\\ \hspace*{1cm} whenever $a,b,c,d$ and $a,b,c,e$ are both composable.

\item $\alpha_{1 , 1 , 2 , 3} ( a , b , c )  \cdot  \alpha^{-1}_{1 , 1 , 2 , 3} ( a , b , e )  \cdot  \alpha_{1 , 1 , 3 , 3} ( a , b c , d )  = $ \\
\hspace*{1cm}$\alpha_{1 , 1 , 2 , 3} ( f , g b , c )  \cdot \alpha^{-1}_{1 , 1 , 2 , 3} ( f , g b , e )  \cdot  \alpha_{1 , 1 , 3 , 3} ( f , g b c , d )  \cdot   $\\ \hspace*{1cm} $\alpha_{1 , 1 , 2 , 3} ( g , b , c )   \cdot  \alpha^{-1}_{1 , 1 , 2 , 3} ( g , b , e )   \cdot \alpha_{1 , 1 , 3 , 3} ( g , b c , d )  $, whenever \\ \hspace*{1cm}$a,b,c,d$ is composable, $e$ is any arrow such that $be = bcd$,\\ 
\hspace*{1cm}and $fg = a$. 
\end{enumerate}

\noindent where $\alpha_{i,j,k,3}$ denotes the restriction of $\alpha$ to $\{ (f,g,h) | s(f) = i, t(f) = j = s(g), t(g) = k = s(h), t(h) = 3 \}$ for $1 \leq i \leq j \leq k \leq3$, and the quadruples of maps in the condition have sources and targets agreeing with those specified by the subscripts on instances of $\alpha$.

\end{definition}

\begin{proposition} If $\beta:\{(f,g,h) | f,g,h \in Arr({\mathcal C}) \; \mbox{\rm and } fgh \;\mbox{\rm is defined} \}\rightarrow K^\times$ is a 3-cocycle on the category $\mathcal C$ of a $\mathsf 3$-parcel, then the restriction of $\beta$ to $\{ (f,g,h) | f,g,h \in Arr({\mathcal C}), \; fgh \; \mbox{\rm is defined,}\; t(h) = 3 \; \mbox{\rm and } s(h) \neq 1\}$ is a partial 3-cocycle.
\end{proposition}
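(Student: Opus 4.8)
The plan is to exploit the fact that $\alpha$ is, by definition, nothing but the restriction $\beta|_D$ of the given $3$-cocycle to the domain $D = \{(f,g,h) : fgh \text{ defined},\ t(h)=3,\ s(h)\neq 1\}$. The first thing I would record is that \emph{every} triple occurring in conditions (1)--(3) already lies in $D$: in each case the last argument is an arrow, or a composite of arrows, whose target is $3$ and whose source is $2$ or $3$. Consequently each of the three conditions can be rewritten verbatim as an identity among values of $\beta$, and the task reduces to deriving those three identities from the single $3$-cocycle identity
\[ \beta(g,h,k)\,\beta(fg,h,k)^{-1}\,\beta(f,gh,k)\,\beta(f,g,hk)^{-1}\,\beta(f,g,h)=1, \]
valid for every composable quadruple $(f,g,h,k)$, throughout using that $K^\times$ is abelian. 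Condition (1) is then immediate: it is exactly this identity read off the composable quadruple whose object-path is $i\to j\to k\to 3\to 3$, and the side condition $k\geq 2$ is precisely the requirement $s(h)=k\neq 1$ that keeps all five triples inside $D$, so that no out-of-domain value of $\beta$ ever appears.

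For condition (2) I would apply the cocycle identity to the two quadruples $(a,b,c,d)$ and $(a,b,c,e)$, each with object-path $i\to j\to 2\to 2\to 3$. Rearranging the identity for a generic $(a,b,c,x)$ gives
\[ \beta(a,bc,x)\,\beta(b,c,x)\,\beta(a,b,cx)^{-1}\,\beta(ab,c,x)^{-1}=\beta(a,b,c)^{-1}, \]
so the left-hand product is independent of the final arrow $x\colon 2\to 3$. Taking $x=d$ and $x=e$, and observing that the four displayed factors are exactly the four factors appearing in condition (2), yields the asserted equality. The point to emphasize is that $\beta(a,b,c)$ has $t(c)=2$ and hence lies \emph{outside} $D$; it is eliminated precisely by comparing the two instances, which is why condition (2) is phrased as an equality of two products rather than a single expression set equal to $1$.

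Condition (3) is the one I expect to be the real work, and I would handle it by expanding the three factors on its left-hand side that involve $a=fg$. Applying the cocycle identity to the quadruples $(f,g,b,c)$, $(f,g,b,e)$ and $(f,g,bc,d)$ lets me solve for each in the form
\[ \beta(fg,X,Y)=\beta(g,X,Y)\,\beta(f,gX,Y)\,\beta(f,g,XY)^{-1}\,\beta(f,g,X), \]
with $(X,Y)$ equal to $(b,c)$, $(b,e)$ and $(bc,d)$ respectively. Substituting these into $\beta(fg,b,c)\,\beta(fg,b,e)^{-1}\,\beta(fg,bc,d)$ and regrouping, the three $\beta(g,\cdot,\cdot)$-factors assemble into the last three factors of the right-hand side of condition (3), the three $\beta(f,g\cdot,\cdot)$-factors assemble into the first three, and the remaining ``correction'' factors are all of the out-of-domain form $\beta(f,g,\cdot)$, where $t(g)=1$. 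Among these corrections the two copies of $\beta(f,g,b)$ and the two copies of $\beta(f,g,bc)$ cancel, leaving only $\beta(f,g,be)\,\beta(f,g,bcd)^{-1}$, and the hypothesis $be=bcd$ forces this surviving factor to be $1$.

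Thus the main obstacle is entirely bookkeeping: verifying that in conditions (2) and (3) every value of $\beta$ that falls outside the domain $D$ cancels, so that the derived identities are genuine identities among partial-$3$-cocycle values. I would dispatch conditions (1) and (2) in a few lines and devote the bulk of the write-up to the cancellation in condition (3), where the use of the side hypothesis $be=bcd$ to kill the last pair of correction terms is the only non-formal ingredient.
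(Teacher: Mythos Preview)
Your proposal is correct and follows essentially the same route as the paper's proof: condition~(1) is a direct instance of the cocycle identity, condition~(2) is obtained by noting that both sides equal $\beta(a,b,c)^{-1}$, and condition~(3) is handled via the cocycle identity on exactly the same three quadruples $(f,g,b,c)$, $(f,g,b,e)$, $(f,g,bc,d)$, with the out-of-domain factors $\beta(f,g,\cdot)$ cancelling pairwise and the final pair $\beta(f,g,be)\,\beta(f,g,bcd)^{-1}$ killed by the hypothesis $be=bcd$. The only difference is organizational: the paper moves everything to one side and replaces three-factor columns by two-factor columns, whereas you substitute directly into the left-hand side; the substance is identical.
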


\begin{proof}
The first condition in the definition of a partial cocycle follows trivially from the cocycle condition, being simply instances of the cocycle condition.  The second follows since in the presence of the cocycle condition each side equals $\beta(a, b, c)$.   

The third requires a little work.  Solving the equation to move all factors to the right hand side, gives the equivalent condition

\begin{eqnarray*} 1 & = & \alpha^{-1}_{1 , 1 , 2 , 3} ( a , b , c )  \cdot  \alpha_{1 , 1 , 2 , 3} ( a , b , e )  \cdot  \alpha^{-1}_{1 , 1 , 3 , 3} ( a , b c , d ) t\\
& & \alpha_{1 , 1 , 2 , 3} ( f , g b , c )  \cdot \alpha^{-1}_{1 , 1 , 2 , 3} ( f , g b , e )  \cdot  \alpha_{1 , 1 , 3 , 3} ( f , g b c , d )  \cdot  \\ 
& &\alpha_{1 , 1 , 2 , 3} ( g , b , c )   \cdot  \alpha^{-1}_{1 , 1 , 2 , 3} ( g , b , e )   \cdot \alpha_{1 , 1 , 3 , 3} ( g , b c , d ). \end{eqnarray*}

Now, suppressing the subscripts denoting sources and targets, and substituting the cocycle $\beta$ gives

\begin{eqnarray*} 1 & = & \beta^{-1} ( a , b , c )  \cdot  \beta ( a , b , e )  \cdot  \beta^{-1}( a , b c , d ) \cdot\\
& & \beta( f , g b , c )  \cdot \beta^{-1} ( f , g b , e )  \cdot  \beta ( f , g b c , d )  \cdot  \\ 
& &\beta( g , b , c )   \cdot  \beta^{-1} ( g , b , e )   \cdot \beta ( g , b c , d ). \end{eqnarray*}

\noindent Now, recalling the $fg = a$, the factors aligned in each column in the preceding expression are three factors of an instance of the coboundary of $\beta$, for the composable quadruples $f,g,b,c$;  $f,g,b,e$ and $f,g,bc,d$, respectively.  Replacing the factors in each column with the product of two factors equal to them by the cocycle condition gives 

\begin{eqnarray*} 1 & = & \beta( f , g , bc )  \cdot \beta^{-1} ( f , g , be )  \cdot  \beta ( f , g  , bcd ) \cdot \\
& & \beta^{-1} ( f, g, b )  \cdot  \beta ( f, g, b )  \cdot  \beta^{-1}(f, g, bc ) \end{eqnarray*}

\noindent the factors on the right hand side of which cancel in pairs, when the condition that $be = bcd$ is recalled, thus completing the proof.  \end{proof}

As usual, to twist by coefficients, we need to specify orientations on the edges of the triangulation, and in the proof of invariance show not only invariance under the combinatorial moves giving PL homoemorphism, for us the moves of the second statement of Theorem \ref{sufficient_moves}, but under changes to the edge-orientations.  The orientation on the knot and the non-invertibility conditions in the definition of the category ${\mathcal C}({\mathcal T})$ remove some of the need to make choices, but not all -- the edges of the knot are oriented in agreement with the orientation of the knot, and edges incident with the knot, but not lying in it, are all oriented away from the knot, and edges incident with the Seifert surface, but not lying in it, are all oriented away from the surface.   To orient the other edges, as usual chose a linear ordering of the vertices (in this case it suffices to chose separate linear orderings of the vertices in the interior of the Seifert surface and of the vertices in the complement of the closed Seifert surface) and orient the remaing edges from the earlier vertex to the later vertex.

Once the edges are oriented, each tetrahedron in the triangulation has a longest oriented path of edges, and a combinatorial orientation induced by the ordering of the vertices along the longest path.  We can now state our main theorem:

\begin{theorem} \label{main}
Let $U:{\mathcal C} \rightarrow {\mathsf 3}$ be a $\mathsf 3$-parcel, and $\alpha$ be a partial 3-cocycle on it.
Then for any knot, Seifert surface, 3-manifold triple $K \subset \Sigma \subset M$, the quantity given by

\[  \sum_{\lambda:{\mathcal C}({\mathcal T})\rightarrow {\mathcal C}} \frac{\prod_{\sigma \in {\mathcal T}_3} 
\alpha^{\epsilon(\sigma)}(\lambda(\sigma))} { \prod_{i=1}^3 \#G_i^{\#T_0^i} }\]

\noindent  where the summation ranges over all $\mathcal C$-colorings of $\mathcal T$,  $\lambda(\sigma)$ denotes the composable of arrows $\lambda$ assigns to the longest path of the tetrahedron $\sigma$ and $\epsilon(\sigma)$ is $+1$ if the combinatorial orientation of $\sigma$ agrees with the orientation of $M$, and $-1$ otherwise, for any flag-like triangulation $\mathcal T$ and a choice of ordering of the vertices off the knot, is independent of the triangulation and choice of ordering, and thus a topological invariant of the triple.
\end{theorem}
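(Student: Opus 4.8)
The plan is to follow the route signposted in the remark after Theorem \ref{untwisted}: having already arranged the normalization correctly in the untwisted case, I would now verify that the full weighted sum is unchanged under each generating move, tracking only the new ingredient, namely the product of $\alpha$-weights. By the theorem that flag-like triangulations of a fixed triple related by a stratification-preserving PL homeomorphism are connected by flag-like Alexander moves, together with the second statement of Theorem \ref{sufficient_moves}, it suffices to prove invariance under the three move types there enumerated---Pachner moves on tetrahedra, extended Pachner moves on triangles of $\Sigma$, and the $3$-$6$ subdivision of a knot edge whose link has exactly three edges---and, as flagged in the paragraph preceding the theorem, under a change of the chosen ordering of the vertices off the knot.

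For each move I would reuse the coloring correspondence established in the proof of Theorem \ref{untwisted}: choosing a connecting edge at any newly introduced vertex identifies the colorings of the two triangulations via functors commuting with the underlying functors to $\mathsf 3$, and subdividing in the stratum of dimension $i$ multiplies the number of colorings by $\#G_i$. The three defining conditions of a partial $3$-cocycle are then matched one-to-one with the three move types. Condition (1), the honest $3$-cocycle relation, is the $4$-simplex identity: read as relating two tetrahedra to three it gives equality of weights for the vertex-preserving $2$-$3$ move under the bijection of colorings, and read as relating one old tetrahedron to the four produced by a $1$-$4$ move it shows the new weight is independent of the gauge at the added bulk vertex, so the sum over that $\#G_3$-fiber equals $\#G_3$ times the old weight and is cancelled by the normalization. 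Condition (2), whose terms $\alpha_{i,j,2,3}$ and $\alpha_{j,2,2,3}$ carry an arrow from the surface into the bulk, is the corresponding identity for the extended Pachner moves on a triangle of $\Sigma$, which I would verify after forming the stark neighborhood by joining with a point on either side of the surface. Condition (3), in which the leading arrow $a$ of source and target $1$ is split as $a = fg$ along the knot, is precisely the identity forcing the weight attached to a subdivided knot edge to equal the weight before subdivision; since the equality is independent of the fiber variable in $G_1$, the sum over the $\#G_1$ new colorings again trivializes and is absorbed by the $\#G_1$ in the denominator.

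Invariance under reordering I would reduce to invariance under reversing the orientation of a single edge, since any two admissible orderings are joined by transpositions of order-adjacent vertices, and such a transposition reverses at most the one edge (if any) joining the two. Because the edges on the knot, the edges incident to the knot, and the edges incident to $\Sigma$ all carry orientations fixed by the orientation data and by the non-invertibility built into $\mathcal C(\mathcal T)$, only edges interior to $\Sigma$ (arrows $2 \to 2$) and edges in the bulk complement (arrows $3 \to 3$) can be reversed; reversing such an edge alters the longest paths and combinatorial orientations of the tetrahedra in its star in a way controlled by the same conditions, and I would check that the resulting local changes in the weight cancel.

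The main obstacle I expect is the knot-edge case governed by condition (3). It sits in the lowest and most decorated stratum, its defining identity is the most intricate of the three, and the $3$-$6$ move is itself realized in Theorem \ref{sufficient_moves} only through an auxiliary sequence of subdivisions and welds; care is needed to confirm that the fiber over $G_1$ genuinely trivializes---that the auxiliary moves contribute no net weight and that the constraint $be = bcd$ relating the two admissible last arrows is exactly what condition (3) supplies---rather than leaving a residual sum. A secondary difficulty is ensuring that the reordering analysis stays within the restricted domain on which $\alpha$ is defined.
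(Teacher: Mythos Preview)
Your plan is essentially the paper's own argument: invoke the second statement of Theorem~\ref{sufficient_moves}, reuse the coloring bijection from the untwisted case to handle the normalization, and then match conditions (1), (2), (3) of the partial 3-cocycle to, respectively, the Pachner moves on tetrahedra, the extended Pachner moves on $\Sigma$-triangles (both the 4--4 and the 2--6), and the 3--6 knot-edge subdivision. The paper carries this out by exhibiting the labeled before/after states and reading off the required identity in each indexed case.

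The one place where you diverge is the treatment of ordering invariance. The paper does not run a separate edge-reversal argument; instead it folds ordering into the move analysis by checking that, for each subdividing move, inserting the new vertex at \emph{any} position in the ordering yields an instance of the same condition (1) or (2). Your proposed reduction to single edge-reversals via adjacent transpositions is the more traditional Wakui-style route and is perfectly sound, but note that verifying it here does require the degenerate instances of condition (1) (those with an identity arrow in one slot), and you should confirm these lie in the restricted domain of $\alpha$---which they do, since the constraint is only $t(h)=3$ and $s(h)\neq 1$. Your worry about the 3--6 move being ``realized through an auxiliary sequence'' is misplaced: in the second statement of Theorem~\ref{sufficient_moves} the 3--6 move is itself one of the generators, so condition (3) is applied to it directly, with no auxiliary moves to track.
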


\begin{proof}
As in the case of Theorem \ref{untwisted}, moves which introduce a new vertex involve a choice of a label for one of the edges incident with the new vertex and lying in the same stratum as the new vertex, which suffices, in the presence of labels from the remaining edges not subdivided by the move to determine uniquely a $\mathcal C$-coloring of the new triangulation.  The denominator is thus multiplied by the number of summands in the numerator for the new triangulation.

It thus suffices to show that for any move which does not introduce a new vertex (the 2-3 Pachner moves and the 4-4 extended Pachner move) for any $\mathcal C$-coloring of the initial triangulation, there is a unique $\mathcal C$-coloring of the triangulation resulting from the move, agreeing with the given coloring on all edges not modified by the move and that the summand in the numerator for these colorings are equal, and similarly for moves which do introduce a new vertex (the 1-4 Pachner move, the 2-6 extended Pachner move and the 3-6 move on an edge of the knot) that given a $\mathcal C$-coloring of the initial triangulation and a choice of label for one of the new edges incident with the new vertex and lying in the same stratum, there is a unique $\mathcal C$-coloring of the new triangulation agreeding with the given coloring on all edges not modified by the move, having the chosen label on the new edge, and, moreover, that the summand in the numerator for the new triangulation for this coloring equals the summand in the numerator for the old triangulation of the given coloring.

Condition (3) is exactly the condition needed to ensure that each summand corresponding to labelings of the six-tetrahedon state in the 3-6 move are equal to the summand corresponding to the labeling of the three-tetrahedron state, as illustrated in Figures \ref{three_tetrahedra} and \ref{six_tetrahedra}.

\begin{figure}[p] \caption{Three Tetrahedron State of the 3-6 Move \label{three_tetrahedra} \vspace*{6pt}}
\begin{tabular}{ll}

$\displaystyle 
\begin{tikzpicture}%[thick][scale=1.5]
\fill[blue,blue, opacity=.5] (1.5,-1.8)--(1.9,-0.5)--(1.5,1.8);
        \node[circle, fill, inner sep=.9pt, outer sep=0pt] (A) at (1.5,1.8){};
	\node[circle, fill, inner sep=.9pt, outer sep=0pt] (B) at (0,0){};
	\node[circle, fill, inner sep=.9pt, outer sep=0pt] (C) at (1.9,-0.5){};
	\node[circle, fill, inner sep=.9pt, outer sep=0pt] (D) at (3,0.5){};
	\node[circle, fill, inner sep=.9pt, outer sep=0pt] (E) at (1.5,-1.8){};
		\begin{scope}
			\draw [dashed,midarrow={>}] (B)--(D) node[font=\tiny, midway, left]{$ d $};	
			\draw [midarrow={>}] (A)--(B) node[font=\tiny, midway, left]{$ a b c $};
			\draw [midarrow={>}] (C)--(B) node[font=\tiny, midway, below]{$ c $};
			\draw [midarrow={>}] (C)--(D) node[font=\tiny, midway, left]{$ c d $};
			\draw [midarrow={>}] (A)--(C) node[font=\tiny, midway, right]{$ a b $};
			\draw [midarrow={>}] (A)--(D) node[font=\tiny, midway, right]{\,$ a b c d $\,};
			\draw [midarrow={>}] (E)--(B) node[font=\tiny, midway, left]{$ b c $\,};
			\draw [midarrow={>}] (E)--(C) node[font=\tiny, midway, right] {$ b $\,};
			\draw [midarrow={>}] (E)--(D) node[font=\tiny, midway, right]{$ b c d $};
			\draw [dashed,thick, midarrow={>}] (1.5,1.8) -- (1.5,-0.2) node[font=\tiny, midway, left]{$ a $};
			\draw [dashed,thick] (1.5,-0.6) -- (1.5,-1.8);
			\fill[red, opacity=.5] (A) circle (3pt);
			\fill[red, opacity=.5] (E) circle (3pt);
			\fill[blue, opacity=.5] (C) circle (3pt);
			\draw[thick,red,red, opacity=.5] (A)--(E);
			\draw[thick,blue,blue, opacity=.5] (A)--(C);
			\draw[thick,blue,blue, opacity=.5] (E)--(C);			
		\end{scope}
\end{tikzpicture}$

&
 
$ \left\{ 
 \begin{array}{ll}

\begin{tikzpicture}%[thick][scale=1.5]
\fill[blue,blue, opacity=.5] (1.5,-1.8)--(1.9,-0.5)--(1.5,1.8);
        \node[circle, fill, inner sep=.9pt, outer sep=0pt] (A) at (1.5,1.8){};
	\node[circle, fill, inner sep=.9pt, outer sep=0pt] (B) at (0,0){};
	\node[circle, fill, inner sep=.9pt, outer sep=0pt] (C) at (1.9,-0.5){};
	\node[circle, fill, inner sep=.9pt, outer sep=0pt] (E) at (1.5,-1.8){};
		\begin{scope}
			\draw [midarrow={>}] (A)--(B) node[font=\tiny, midway, left]{$ a b c $};
			\draw [midarrow={>}] (C)--(B) node[font=\tiny, midway, below]{$ c $};
			\draw [midarrow={>}] (A)--(C) node[font=\tiny, midway, right]{$ a b $};
			\draw [midarrow={>}] (E)--(B) node[font=\tiny, midway, left]{$ b c $\,};
			\draw [midarrow={>}] (E)--(C) node[font=\tiny, midway, right] {$ b $\,};
			\draw [dashed,thick, midarrow={>}] (1.5,1.8) -- (1.5,-0.2) node[font=\tiny, midway, left]{$ a $};
			\draw [dashed,thick] (1.5,-0.6) -- (1.5,-1.8);
			\fill[red, opacity=.5] (A) circle (3pt);
			\fill[red, opacity=.5] (E) circle (3pt);
			\fill[blue, opacity=.5] (C) circle (3pt);
			\draw[thick,red,red, opacity=.5] (A)--(E);
			\draw[thick,blue,blue, opacity=.5] (A)--(C);
			\draw[thick,blue,blue, opacity=.5] (E)--(C);
		\end{scope}
\end{tikzpicture}

& \alpha_{1 , 1 , 2 , 3} ( a , b , c ) 

\\
\\

\begin{tikzpicture}%[thick][scale=1.5]
\fill[blue,blue, opacity=.5] (1.5,-1.8)--(1.9,-0.5)--(1.5,1.8);
	\node[circle, fill, inner sep=.9pt, outer sep=0pt] (A) at (1.5,1.8){};
	\node[circle, fill, inner sep=.9pt, outer sep=0pt] (C) at (1.9,-0.5){};
	\node[circle, fill, inner sep=.9pt, outer sep=0pt] (D) at (3,0.5){};
	\node[circle, fill, inner sep=.9pt, outer sep=0pt] (E) at (1.5,-1.8){};
		\begin{scope}
			\draw [midarrow={>}] (C)--(D) node[font=\tiny, midway, left]{$ c d $};
			\draw [midarrow={>}] (A)--(C) node[font=\tiny, midway, right]{$ a b $};
			\draw [midarrow={>}] (A)--(D) node[font=\tiny, midway, right]{\,$ a b c d $\,};
			\draw [midarrow={>}] (E)--(C) node[font=\tiny, midway, right] {$ b $\,};
			\draw [midarrow={>}] (E)--(D) node[font=\tiny, midway, right]{$ b c d $};
			\draw [midarrow={>}] (A)--(E) node[font=\tiny, midway, left]{$ a $};
			\fill[red, opacity=.5] (A) circle (3pt);
			\fill[red, opacity=.5] (E) circle (3pt);
			\fill[blue, opacity=.5] (C) circle (3pt);
			\draw[thick,red,red, opacity=.5] (A)--(E);
			\draw[thick,blue,blue, opacity=.5] (A)--(C);
			\draw[thick,blue,blue, opacity=.5] (E)--(C);
		\end{scope}
\end{tikzpicture}

& \displaystyle \alpha^{-1}_{1 , 1 , 2 , 3} ( a , b , c d )

\\ 
\\

\begin{tikzpicture}%[thick][scale=1.5]
         \node[circle, fill, inner sep=.9pt, outer sep=0pt] (A) at (1.5,1.8){};
	\node[circle, fill, inner sep=.9pt, outer sep=0pt] (B) at (0,0){};
	\node[circle, fill, inner sep=.9pt, outer sep=0pt] (D) at (3,0.5){};
	\node[circle, fill, inner sep=.9pt, outer sep=0pt] (E) at (1.5,-1.8){};
		\begin{scope}
			\draw [dashed,midarrow={>}] (B)--(D) node[font=\tiny, midway, left]{$ d $};	
			\draw [midarrow={>}] (A)--(B) node[font=\tiny, midway, left]{$ a b c $};
			\draw [midarrow={>}] (A)--(D) node[font=\tiny, midway, right]{\,$ a b c d $\,};
			\draw [midarrow={>}] (E)--(B) node[font=\tiny, midway, left]{$ b c $\,};
			\draw [midarrow={>}] (E)--(D) node[font=\tiny, midway, right]{$ b c d $};
			\draw [midarrow={>}] (A)--(E) node[font=\tiny, midway, right]{$ a $};
			\fill[red, opacity=.5] (A) circle (3pt);
			\fill[red, opacity=.5] (E) circle (3pt);
			\draw[thick,red,red, opacity=.5] (A)--(E);
		\end{scope}
\end{tikzpicture}

&  \alpha_{1 , 1 , 3 , 3} ( a , b c , d )

\\
\end{array} \right.$

\\
\\

\end{tabular}

\end{figure}
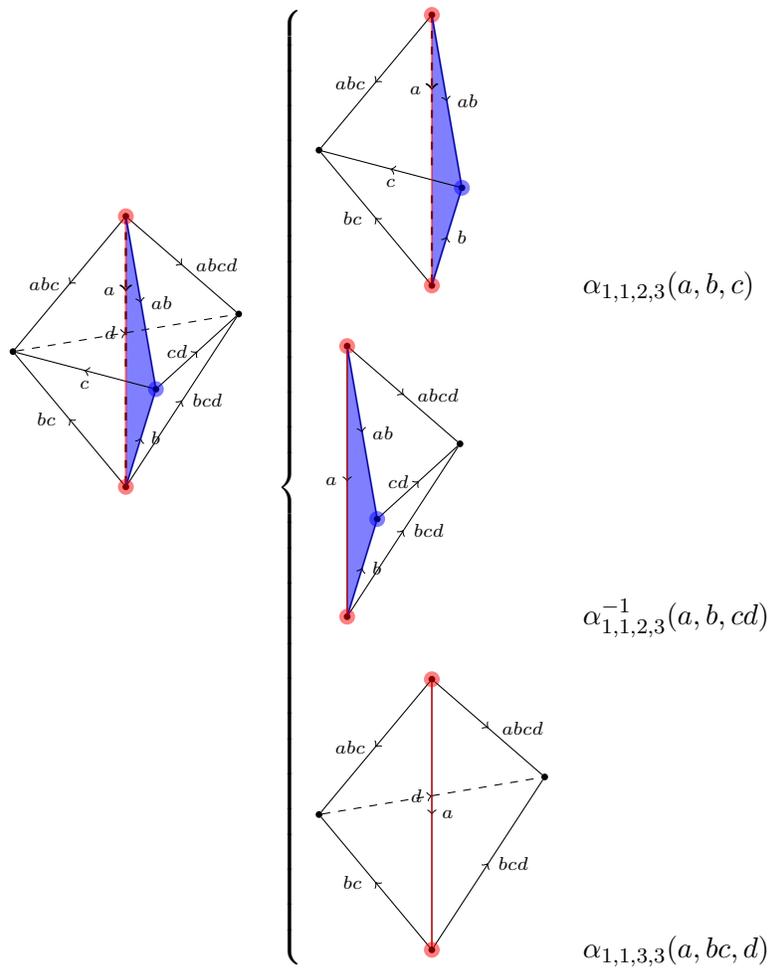

\begin{figure}[p]\caption{Six Tetrahedron State of the 3-6 Move \label{six_tetrahedra} \vspace*{6pt}}

\begin{tabular}{ll}

$\displaystyle 
\begin{tikzpicture}%[thick][scale=1.5]
\fill[blue,blue, opacity=.5] (1.5,-1.8)--(1.9,-0.5)--(1.5,1.8);
        \node[circle, fill, inner sep=.9pt, outer sep=0pt] (A) at (1.5,1.8){};
	\node[circle, fill, inner sep=.9pt, outer sep=0pt] (B) at (0,0){};
	\node[circle, fill, inner sep=.9pt, outer sep=0pt] (C) at (1.9,-0.5){};
	\node[circle, fill, inner sep=.9pt, outer sep=0pt] (D) at (3,0.5){};
	\node[circle, fill, inner sep=.9pt, outer sep=0pt] (E) at (1.5,-1.8){};
	\node[circle, fill, inner sep=.9pt, outer sep=0pt] (F) at (1.5,-0.1){};
		\begin{scope}
			\draw [dashed,midarrow={>}] (B)--(D) node[font=\tiny, midway, left]{$ d $};	
			\draw [midarrow={>}] (A)--(B) node[font=\tiny, midway, left]{$ a b c $};
			\draw [midarrow={>}] (C)--(B) node[font=\tiny, midway, below]{$ c $};
			\draw [midarrow={>}] (C)--(D) node[font=\tiny, midway, right]{$ c d $};
			\draw [midarrow={>}] (A)--(C) node[font=\tiny, midway, right]{$ a b $};
			\draw [midarrow={>}] (A)--(D) node[font=\tiny, midway, right]{\,$ a b c d $\,};
			\draw [midarrow={>}] (E)--(B) node[font=\tiny, midway, left]{$ b c $\,};
			\draw [midarrow={>}] (E)--(C) node[font=\tiny, midway, right] {$ b $\,};
			\draw [midarrow={>}] (E)--(D) node[font=\tiny, midway, right]{$ b c d $};
			\draw [dashed,thick, midarrow={>}] (1.5,1.8) -- (1.5,-0.2) node[font=\tiny, midway, left]{$ f $};
			\draw [dashed,thick, midarrow={>}] (1.5,-0.6) -- (1.5,-1.8) node[font=\tiny, midway, left]{$ g $};
			\fill[red, opacity=.5] (A) circle (3pt);
			\fill[red, opacity=.5] (E) circle (3pt);
			\fill[blue, opacity=.5] (C) circle (3pt);
			\draw[thick,red,red, opacity=.5] (A)--(E);
			\draw[thick,blue,blue, opacity=.5] (A)--(C);
			\draw[thick,blue,blue, opacity=.5] (E)--(C);	
			\draw [thick, midarrow={>}] (F)--(B) node[font=\tiny, midway, above]{$ g b c $};
			\draw [thick, midarrow={>}] (F)--(C) node[font=\tiny, midway, left]{$ g b $};
			\draw [thick, midarrow={>}] (F)--(D) node[font=\tiny, midway, above]{$ g b c d $};		
		\end{scope}
\end{tikzpicture}$

&
 
$ \left\{ 
 \begin{array}{ll}

\begin{tikzpicture}%[thick][scale=1.5]
\fill[blue,blue, opacity=.5] (1.5,0)--(1.9,-0.5)--(1.5,1.8);
       \node[circle, fill, inner sep=.9pt, outer sep=0pt] (A) at (1.5,1.8){};
	\node[circle, fill, inner sep=.9pt, outer sep=0pt] (B) at (0,0){};
	\node[circle, fill, inner sep=.9pt, outer sep=0pt] (C) at (1.9,-0.5){};
	\node[circle, fill, inner sep=.9pt, outer sep=0pt] (F) at (1.5,0){};
		\begin{scope}
			\draw [midarrow={>}] (A)--(B) node[font=\tiny, midway, left]{$ ( a b c = ) f g b c $};
			\draw [midarrow={>}] (C)--(B) node[font=\tiny, midway, below]{$ c $};
			\draw [midarrow={>}] (A)--(C) node[font=\tiny, midway, right]{$ a b ( = f g b )$};
			\draw [dashed,thick, midarrow={>}] (1.5,1.8) -- (1.5,0) node[font=\tiny, midway, left]{$ f $};
			\fill[red, opacity=.5] (A) circle (3pt);
			\fill[blue, opacity=.5] (C) circle (3pt);
			\draw[thick,red,red, opacity=.5] (A)--(1.5,0);
			\draw[thick,blue,blue, opacity=.5] (A)--(C);
			\draw [dashed, thick, midarrow={>}] (F)--(B) node[font=\tiny, midway, above]{$ g b c $};
			\draw [dashed, thick, midarrow={>}] (F)--(C) node[font=\tiny, midway, left]{$ g b $};
			\draw[thick,blue,blue, opacity=.5] (F)--(C);
			\fill[red, opacity=.5] (F) circle (3pt);
		\end{scope}
\end{tikzpicture}

& \alpha_{1 , 1 , 2 , 3} ( f , g b , c ) 

\\
\\

\begin{tikzpicture}%[thick][scale=1.5]
        \node[circle, fill, inner sep=.9pt, outer sep=0pt] (A) at (1.5,1.8){};
	\node[circle, fill, inner sep=.9pt, outer sep=0pt] (B) at (0,0){};
	\node[circle, fill, inner sep=.9pt, outer sep=0pt] (D) at (3,0.5){};
	\node[circle, fill, inner sep=.9pt, outer sep=0pt] (F) at (1.5,-0.1){};
		\begin{scope}
			\draw [dashed,midarrow={>}] (B)--(D) node[font=\tiny, midway, left]{$ d $};	
			\draw [midarrow={>}] (A)--(B) node[font=\tiny, midway, left]{$ ( a b c = ) f g b c $};
			\draw [midarrow={>}] (A)--(D) node[font=\tiny, midway, right]{\,$ f g b c d ( = a b c d ) $\,};
			\draw [thick, midarrow={>}] (1.5,1.8) -- (F) node[font=\tiny, midway, left]{$ f $};
			\fill[red, opacity=.5] (A) circle (3pt);
			\draw[thick,red,red, opacity=.5] (A)--(F);
			\draw [thick, midarrow={>}] (F)--(B) node[font=\tiny, midway, below]{$ g b c $};
			\draw [thick, midarrow={>}] (F)--(D) node[font=\tiny, midway, below]{$ g b c d $};
			\fill[red, opacity=.5] (F) circle (3pt);
		\end{scope}
\end{tikzpicture}

& \displaystyle \alpha_{1 , 1 , 3 , 3} ( f , g b c , d )

\\ 
\\

\begin{tikzpicture}%[thick][scale=1.5]
\fill[blue,blue, opacity=.5] (1.5,-0.1)--(1.9,-0.5)--(1.5,1.8);
        \node[circle, fill, inner sep=.9pt, outer sep=0pt] (A) at (1.5,1.8){};
	\node[circle, fill, inner sep=.9pt, outer sep=0pt] (C) at (1.9,-0.5){};
	\node[circle, fill, inner sep=.9pt, outer sep=0pt] (D) at (3,0.5){};
	\node[circle, fill, inner sep=.9pt, outer sep=0pt] (F) at (1.5,-0.1){};
		\begin{scope}
			\draw [midarrow={>}] (C)--(D) node[font=\tiny, midway, below]{$ c d $};
			\draw [midarrow={>}] (A)--(C) node[font=\tiny, midway, right]{$ f g b(=\!a b ) $};
			\draw [midarrow={>}] (A)--(D) node[font=\tiny, midway, right]{\,$ f g b c d ( = a b c d ) $\,};
			\draw [dashed,thick, midarrow={>}] (1.5,1.8) -- (1.5,-0.2) node[font=\tiny, midway, left]{$ f $};
			\fill[red, opacity=.5] (A) circle (3pt);
			\fill[blue, opacity=.5] (C) circle (3pt);
			\draw[thick,red,red, opacity=.5] (A)--(F);
			\draw[thick,blue,blue, opacity=.5] (A)--(C);
			\draw [thick, midarrow={>}] (F)--(C) node[font=\tiny, midway, left]{$ g b $};
			\draw [dashed, thick, midarrow={>}] (F)--(D) node[font=\tiny, midway, right]{$ g b c d $};
			\draw[thick,blue,blue, opacity=.5] (F)--(C);
			\fill[red, opacity=.5] (F) circle (3pt);
		\end{scope}
\end{tikzpicture}

&  \alpha^{-1}_{1 , 1 , 2 , 3} ( f , g b , c d )

\\
\\

\begin{tikzpicture}%[thick][scale=1.5]
\fill[blue,blue, opacity=.5] (1.5,-1.8)--(1.9,-0.5)--(1.5,0);
	\node[circle, fill, inner sep=.9pt, outer sep=0pt] (B) at (0,0){};
	\node[circle, fill, inner sep=.9pt, outer sep=0pt] (C) at (1.9,-0.5){};
	\node[circle, fill, inner sep=.9pt, outer sep=0pt] (E) at (1.5,-1.8){};
	\node[circle, fill, inner sep=.9pt, outer sep=0pt] (F) at (1.5,0){};
		\begin{scope}
			\draw [midarrow={>}] (C)--(B) node[font=\tiny, midway, below]{$ c $};
			\draw [midarrow={>}] (E)--(B) node[font=\tiny, midway, left]{$ b c $\,};
			\draw [midarrow={>}] (E)--(C) node[font=\tiny, midway, right] {$ b $\,};
			\draw [dashed,thick, midarrow={>}] (F) -- (1.5,-1.8) node[font=\tiny, midway, left]{$ g $};
			\fill[red, opacity=.5] (E) circle (3pt);
			\fill[blue, opacity=.5] (C) circle (3pt);
			\draw[thick,blue,blue, opacity=.5] (E)--(C);	
			\draw [thick, midarrow={>}] (F)--(B) node[font=\tiny, midway, above]{$ g b c $};
			\draw [thick, midarrow={>}] (F)--(C) node[font=\tiny, midway, above]{\,\,\,\,$ g b $};
			\fill[red, opacity=.5] (F) circle (3pt);
			\draw[thick,red,red, opacity=.5] (F)--(E);
			\draw[thick,blue,blue, opacity=.5] (F)--(C);	
		\end{scope}
\end{tikzpicture}

&  \alpha_{1 , 1 , 2 , 3} ( g , b , c )

\\
\\

\begin{tikzpicture}%[thick][scale=1.5]
	\node[circle, fill, inner sep=.9pt, outer sep=0pt] (B) at (0,0){};
	\node[circle, fill, inner sep=.9pt, outer sep=0pt] (D) at (3,0.5){};
	\node[circle, fill, inner sep=.9pt, outer sep=0pt] (E) at (1.5,-1.8){};
	\node[circle, fill, inner sep=.9pt, outer sep=0pt] (F) at (1.5,-0.1){};
		\begin{scope}
			\draw [midarrow={>}] (B)--(D) node[font=\tiny, midway, above]{$ d $};	
			\draw [midarrow={>}] (E)--(B) node[font=\tiny, midway, left]{$ b c $\,};
			\draw [midarrow={>}] (E)--(D) node[font=\tiny, midway, right]{$ b c d $};
			\draw [thick, midarrow={>}] (F) -- (1.5,-1.8) node[font=\tiny, midway, left]{$ g $};
			\fill[red, opacity=.5] (F) circle (3pt);
			\fill[red, opacity=.5] (E) circle (3pt);
			\draw[thick,red,red, opacity=.5] (F)--(E);
			\draw [thick, midarrow={>}] (F)--(B) node[font=\tiny, midway, below]{$ g b c $};
			\draw [thick, midarrow={>}] (F)--(D) node[font=\tiny, midway, below]{$ g b c d $};
		\end{scope}
\end{tikzpicture}

&  \alpha_{1 , 1 , 3 , 3} ( g , b c , d )

\\
\\

\begin{tikzpicture}%[thick][scale=1.5]
\fill[blue,blue, opacity=.5] (1.5,-1.8)--(1.9,-0.5)--(1.5,-0.1);
	\node[circle, fill, inner sep=.9pt, outer sep=0pt] (C) at (1.9,-0.5){};
	\node[circle, fill, inner sep=.9pt, outer sep=0pt] (D) at (3,0.5){};
	\node[circle, fill, inner sep=.9pt, outer sep=0pt] (E) at (1.5,-1.8){};
	\node[circle, fill, inner sep=.9pt, outer sep=0pt] (F) at (1.5,-0.1){};
		\begin{scope}
			\draw [midarrow={>}] (C)--(D) node[font=\tiny, midway, right]{$ c d $};
			\draw [midarrow={>}] (E)--(C) node[font=\tiny, midway, right] {\!\!$ b $\,};
			\draw [midarrow={>}] (E)--(D) node[font=\tiny, midway, right]{$ b c d $};
			\draw [thick, midarrow={>}] (F) -- (1.5,-1.8) node[font=\tiny, midway, left]{$ g $};
			\fill[red, opacity=.5] (F) circle (3pt);
			\fill[red, opacity=.5] (E) circle (3pt);
			\fill[blue, opacity=.5] (C) circle (3pt);
			\draw[thick,red,red, opacity=.5] (F)--(E);
			\draw[thick,blue,blue, opacity=.5] (E)--(C);	
			\draw[thick,blue,blue, opacity=.5] (F)--(C);	
			\draw [thick, midarrow={>}] (F)--(C) node[font=\tiny, midway, below]{$ g b $};
			\draw [thick, midarrow={>}] (F)--(D) node[font=\tiny, midway, above]{$ g b c d $};
		\end{scope}
\end{tikzpicture}

&  \alpha^{-1}_{1 , 1 , 2 , 3} ( g , b , c d )

\\

\end{array} \right.$

\\
\\

\end{tabular}
\end{figure}

Conditions (1) and (2) each ensure invariance under two types of moves -- the 1-4 and 2-3 Pachner moves and the 2-6 and 4-4 extended Pachner moves, respectively.  In each case, the differently indexed instances of the conditions correspond to the ways in which a flag-like triangulation of the state with fewer tetrahedra can intersect the strata.

For instance, the cases of the 4-4 move and the 2-6 move in which the tetrahedra being modified by the moves are not incident with the knot are show in Figures \ref{four_before} and \ref{four_after} and Figures \ref{two_extended} and \ref{six_no_knot}, respectively.  The latter depends on the choice of the label $f$, which then induces the labels $g$ and $h$ such that there are factorizations of $c$ and $d$ as $c = fg$ and $d = fh$, respectively.

%%%%%%%%%%%%%%%%%%%%%%%%%%%%%%%%% Relation : 4-4 move %%%%%%%%%%%%%%%%%%%%%%%%%%%%%%%%% 

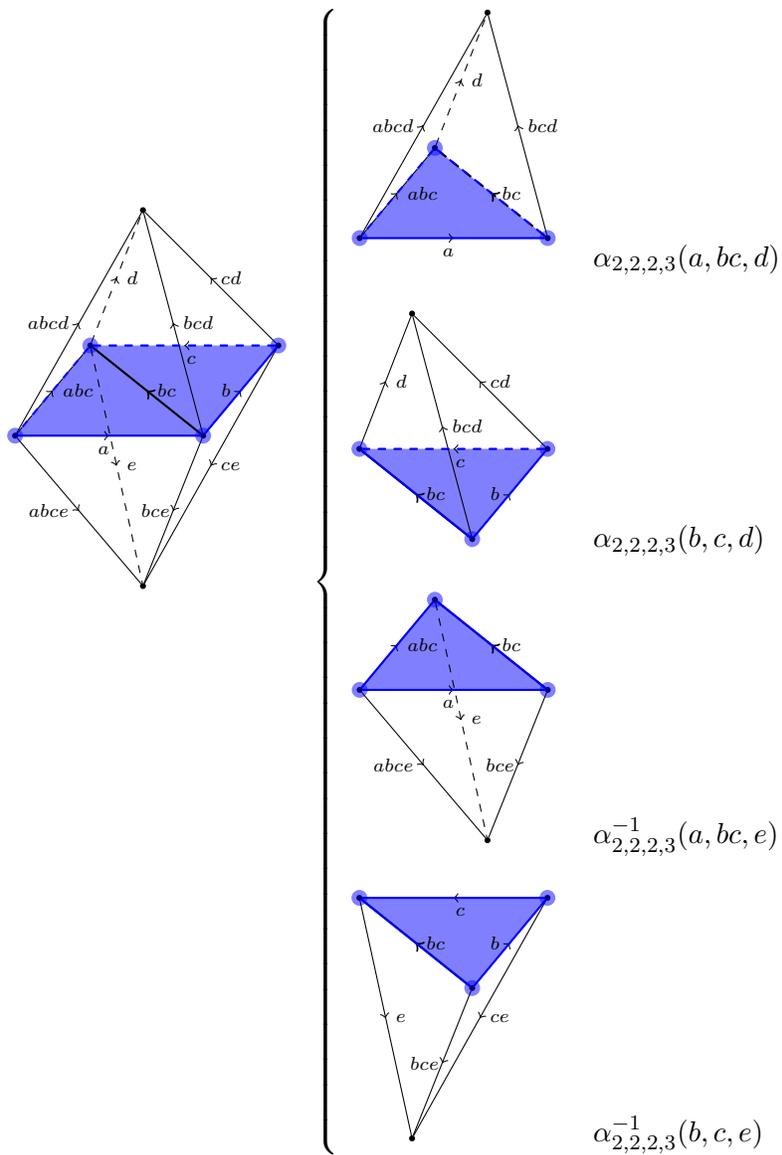
\begin{figure}[p]\caption{Initial State of 4-4 Move \label{four_before} \vspace*{6pt}}
\begin{tabular}{ll}

$\displaystyle
\begin{tikzpicture}%[scale=1.5]
	\fill[blue,blue, opacity=.5] (0,0)--(2.5,0)--(3.5,1.2)--(1,1.2)--(0,0);
 	\node[circle, fill, inner sep=.8pt, outer sep=0pt] (A) at (0,0){};
	\node[circle, fill, inner sep=.8pt, outer sep=0pt] (B) at (2.5,0){};
	\node[circle, fill, inner sep=.8pt, outer sep=0pt] (C) at (3.5,1.2){};
	\node[circle, fill, inner sep=.8pt, outer sep=0pt] (D) at (1,1.2){};
	\node[circle, fill, inner sep=.8pt, outer sep=0pt] (E) at (1.7,3){};
	\node[circle, fill, inner sep=.8pt, outer sep=0pt] (F) at (1.7,-2){};
	\begin{scope}
		\draw [midarrow={>}] (A)--(B) node[font=\tiny, midway, below]{$ a $\,\,\,};
		\draw [midarrow={>}] (B)--(C) node[font=\tiny, midway, left]{$ b $};
		\draw [dashed,midarrow={>}] (C)--(D) node[font=\tiny, midway, below]{\,\,\,\,$ c $};
		\draw [dashed,midarrow={>}] (A)--(D) node[font=\tiny, midway, right]{$ abc $};
		\draw [thick,midarrow={>}] (B)--(D) node[font=\tiny, midway, right]{$ bc $};
		\draw [midarrow={>}] (A)--(E) node[font=\tiny, midway, left]{$ abcd $};
		\draw [midarrow={>}] (B)--(E) node[font=\tiny, midway, right]{$ bcd $};
		\draw [midarrow={>}] (C)--(E) node[font=\tiny, midway, right]{$ cd $};
		\draw [dashed,midarrow={>}] (D)--(E) node[font=\tiny, midway, right]{$ d $};
		\draw [midarrow={>}] (A)--(F) node[font=\tiny, midway, left]{$ abce $};
		\draw [midarrow={>}] (B)--(F) node[font=\tiny, midway, left]{$ bce$\!\!};
		\draw [midarrow={>}] (C)--(F) node[font=\tiny, midway, right]{$ ce $};
		\draw [dashed,midarrow={>}] (D)--(F) node[font=\tiny, midway, right]{$ e $};
		\draw[thick,blue,blue] (A)--(B)--(C);
		\draw[dashed,thick,blue,blue] (C)--(D)--(A);
		\fill[blue, opacity=.5] (A) circle (3pt);
		\fill[blue, opacity=.5] (B) circle (3pt);
		\fill[blue, opacity=.5] (C) circle (3pt);
		\fill[blue, opacity=.5] (D) circle (3pt);
	\end{scope}
\end{tikzpicture}$

&
 
$ \left\{ 
 \begin{array}{ll}

\begin{tikzpicture}%[scale=1.5]
	\fill[blue,blue, opacity=.5] (0,0)--(2.5,0)--(1,1.2)--(0,0);
 	\node[circle, fill, inner sep=.8pt, outer sep=0pt] (A) at (0,0){};
	\node[circle, fill, inner sep=.8pt, outer sep=0pt] (B) at (2.5,0){};
	\node[circle, fill, inner sep=.8pt, outer sep=0pt] (D) at (1,1.2){};
	\node[circle, fill, inner sep=.8pt, outer sep=0pt] (E) at (1.7,3){};
	\begin{scope}
		\draw [midarrow={>}] (A)--(B) node[font=\tiny, midway, below]{$ a $\,\,\,};
		\draw [dashed,midarrow={>}] (A)--(D) node[font=\tiny, midway, right]{$ abc $};
		\draw [dashed,thick,midarrow={>}] (B)--(D) node[font=\tiny, midway, right]{$ bc $};
		\draw [midarrow={>}] (A)--(E) node[font=\tiny, midway, left]{$ abcd $};
		\draw [midarrow={>}] (B)--(E) node[font=\tiny, midway, right]{$ bcd $};
		\draw [dashed,midarrow={>}] (D)--(E) node[font=\tiny, midway, right]{$ d $};
		\draw[thick,blue,blue] (A)--(B);
		\draw[dashed,thick,blue,blue] (D)--(A);
		\draw[dashed,thick,blue,blue] (D)--(B);
		\fill[blue, opacity=.5] (A) circle (3pt);
		\fill[blue, opacity=.5] (B) circle (3pt);
		\fill[blue, opacity=.5] (D) circle (3pt);
	\end{scope}
\end{tikzpicture}

& \alpha_{2 , 2 , 2 , 3} ( a , b c , d ) 
\\
\\

\begin{tikzpicture}%[scale=1.5]
	\fill[blue,blue, opacity=.5] (2.5,0)--(3.5,1.2)--(1,1.2);
	\node[circle, fill, inner sep=.8pt, outer sep=0pt] (B) at (2.5,0){};
	\node[circle, fill, inner sep=.8pt, outer sep=0pt] (C) at (3.5,1.2){};
	\node[circle, fill, inner sep=.8pt, outer sep=0pt] (D) at (1,1.2){};
	\node[circle, fill, inner sep=.8pt, outer sep=0pt] (E) at (1.7,3){};
	\begin{scope}
		\draw [midarrow={>}] (B)--(C) node[font=\tiny, midway, left]{$ b $};
		\draw [dashed,midarrow={>}] (C)--(D) node[font=\tiny, midway, below]{\,\,\,\,$ c $};
		\draw [thick,midarrow={>}] (B)--(D) node[font=\tiny, midway, right]{$ bc $};
		\draw [midarrow={>}] (B)--(E) node[font=\tiny, midway, right]{$ bcd $};
		\draw [midarrow={>}] (C)--(E) node[font=\tiny, midway, right]{$ cd $};
		\draw [midarrow={>}] (D)--(E) node[font=\tiny, midway, right]{$ d $};
		\draw[thick,blue,blue] (B)--(C);
		\draw[thick,blue,blue] (B)--(D);
		\draw[dashed,thick,blue,blue] (C)--(D);
		\fill[blue, opacity=.5] (B) circle (3pt);
		\fill[blue, opacity=.5] (C) circle (3pt);
		\fill[blue, opacity=.5] (D) circle (3pt);
	\end{scope}
\end{tikzpicture}

& \displaystyle \alpha_{2 , 2 , 2 , 3} ( b , c , d )
\\
\\

\begin{tikzpicture}%[scale=1.5]
	\fill[blue,blue, opacity=.5] (0,0)--(2.5,0)--(1,1.2)--(0,0);
 	\node[circle, fill, inner sep=.8pt, outer sep=0pt] (A) at (0,0){};
	\node[circle, fill, inner sep=.8pt, outer sep=0pt] (B) at (2.5,0){};
	\node[circle, fill, inner sep=.8pt, outer sep=0pt] (D) at (1,1.2){};
	\node[circle, fill, inner sep=.8pt, outer sep=0pt] (F) at (1.7,-2){};
	\begin{scope}
		\draw [midarrow={>}] (A)--(B) node[font=\tiny, midway, below]{$ a $\,\,\,};
		\draw [midarrow={>}] (A)--(D) node[font=\tiny, midway, right]{$ abc $};
		\draw [thick,midarrow={>}] (B)--(D) node[font=\tiny, midway, right]{$ bc $};
		\draw [midarrow={>}] (A)--(F) node[font=\tiny, midway, left]{$ abce $};
		\draw [midarrow={>}] (B)--(F) node[font=\tiny, midway, left]{$ bce$\!\!};
		\draw [dashed,midarrow={>}] (D)--(F) node[font=\tiny, midway, right]{$ e $};
		\draw[thick,blue,blue] (A)--(B);
		\draw[thick,blue,blue] (D)--(A);
		\draw[thick,blue,blue] (D)--(B);
		\fill[blue, opacity=.5] (A) circle (3pt);
		\fill[blue, opacity=.5] (B) circle (3pt);
		\fill[blue, opacity=.5] (D) circle (3pt);
	\end{scope}
\end{tikzpicture}

& \displaystyle \alpha^{-1}_{2 , 2 , 2 , 3} ( a , b c , e )
\\
\\

\begin{tikzpicture}%[scale=1.5]
	\fill[blue,blue, opacity=.5] (2.5,0)--(3.5,1.2)--(1,1.2);
	\node[circle, fill, inner sep=.8pt, outer sep=0pt] (B) at (2.5,0){};
	\node[circle, fill, inner sep=.8pt, outer sep=0pt] (C) at (3.5,1.2){};
	\node[circle, fill, inner sep=.8pt, outer sep=0pt] (D) at (1,1.2){};
	\node[circle, fill, inner sep=.8pt, outer sep=0pt] (F) at (1.7,-2){};
	\begin{scope}
		\draw [midarrow={>}] (B)--(C) node[font=\tiny, midway, left]{$ b $};
		\draw [midarrow={>}] (C)--(D) node[font=\tiny, midway, below]{\,\,\,\,$ c $};
		\draw [thick,midarrow={>}] (B)--(D) node[font=\tiny, midway, right]{$ bc $};
		\draw [midarrow={>}] (B)--(F) node[font=\tiny, midway, left]{$ bce$\!\!};
		\draw [midarrow={>}] (C)--(F) node[font=\tiny, midway, right]{$ ce $};
		\draw [midarrow={>}] (D)--(F) node[font=\tiny, midway, right]{$ e $};
		\draw[thick,blue,blue] (B)--(C);
		\draw[thick,blue,blue] (C)--(D);
		\draw[thick,blue,blue] (B)--(D);
		\fill[blue, opacity=.5] (B) circle (3pt);
		\fill[blue, opacity=.5] (C) circle (3pt);
		\fill[blue, opacity=.5] (D) circle (3pt);
	\end{scope}
\end{tikzpicture}

& \displaystyle \alpha^{-1}_{2 , 2 , 2 , 3} ( b , c , e )
\\

\end{array} \right.$

\end{tabular}
\end{figure}

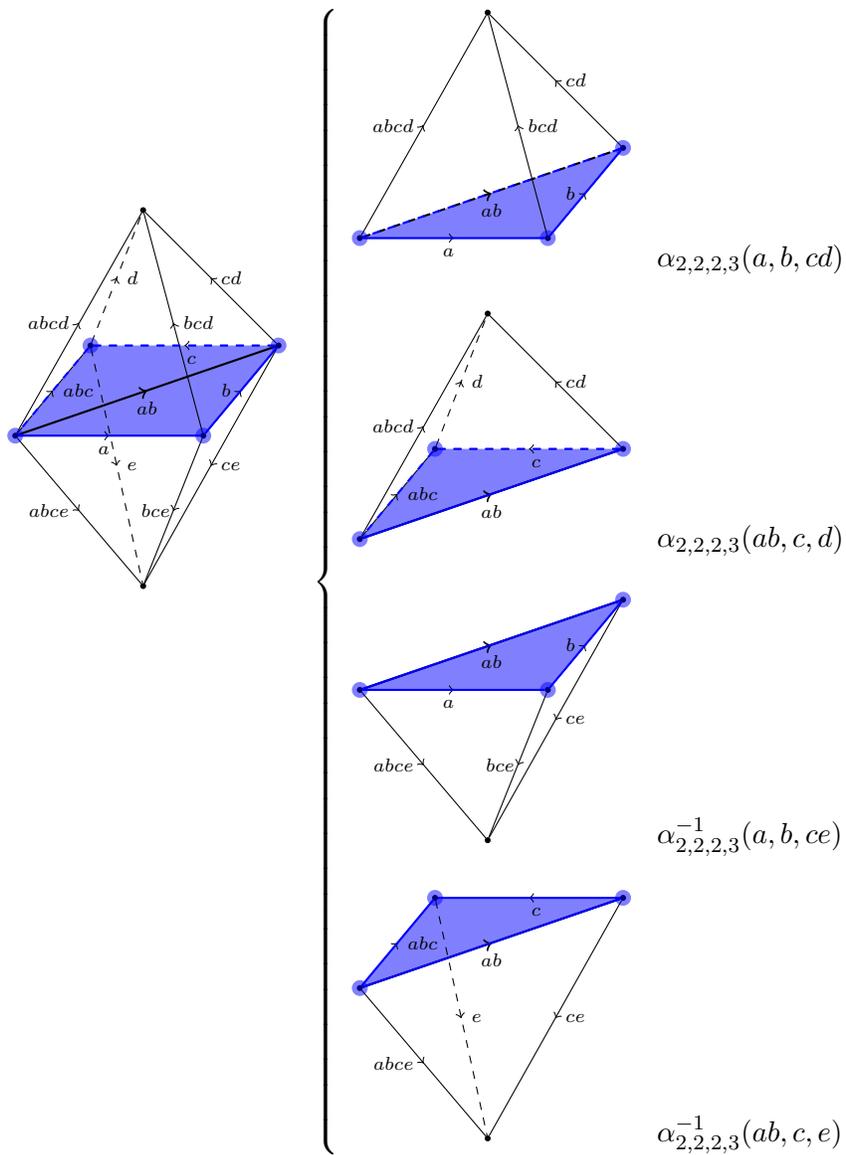
\begin{figure}[p]\caption{Final State of 4-4 Move \label{four_after} \vspace*{6pt}}
\begin{tabular}{ll}

$\displaystyle 
\begin{tikzpicture}%[scale=1.5]
	\fill[blue,blue, opacity=.5] (0,0)--(2.5,0)--(3.5,1.2)--(1,1.2)--(0,0);
 	\node[circle, fill, inner sep=.8pt, outer sep=0pt] (A) at (0,0){};
	\node[circle, fill, inner sep=.8pt, outer sep=0pt] (B) at (2.5,0){};
	\node[circle, fill, inner sep=.8pt, outer sep=0pt] (C) at (3.5,1.2){};
	\node[circle, fill, inner sep=.8pt, outer sep=0pt] (D) at (1,1.2){};
	\node[circle, fill, inner sep=.8pt, outer sep=0pt] (E) at (1.7,3){};
	\node[circle, fill, inner sep=.8pt, outer sep=0pt] (F) at (1.7,-2){};
	\begin{scope}
		\draw [midarrow={>}] (A)--(B) node[font=\tiny, midway, below]{$ a $\,\,\,};
		\draw [midarrow={>}] (B)--(C) node[font=\tiny, midway, left]{$ b $};
		\draw [dashed,midarrow={>}] (C)--(D) node[font=\tiny, midway, below]{\,\,\,\,$ c $};
		\draw [dashed,midarrow={>}] (A)--(D) node[font=\tiny, midway, right]{$ abc $};
		\draw [thick,midarrow={>}] (A)--(C) node[font=\tiny, midway, below]{$ ab $};
		\draw [midarrow={>}] (A)--(E) node[font=\tiny, midway, left]{$ abcd $};
		\draw [midarrow={>}] (B)--(E) node[font=\tiny, midway, right]{$ bcd $};
		\draw [midarrow={>}] (C)--(E) node[font=\tiny, midway, right]{$ cd $};
		\draw [dashed,midarrow={>}] (D)--(E) node[font=\tiny, midway, right]{$ d $};
		\draw [midarrow={>}] (A)--(F) node[font=\tiny, midway, left]{$ abce $};
		\draw [midarrow={>}] (B)--(F) node[font=\tiny, midway, left]{$ bce$\!\!};
		\draw [midarrow={>}] (C)--(F) node[font=\tiny, midway, right]{$ ce $};
		\draw [dashed,midarrow={>}] (D)--(F) node[font=\tiny, midway, right]{$ e $};
		\draw[thick,blue,blue] (A)--(B)--(C);
		\draw[dashed,thick,blue,blue] (C)--(D)--(A);
		\fill[blue, opacity=.5] (A) circle (3pt);
		\fill[blue, opacity=.5] (B) circle (3pt);
		\fill[blue, opacity=.5] (C) circle (3pt);
		\fill[blue, opacity=.5] (D) circle (3pt);
	\end{scope}
\end{tikzpicture}$

&
 
$ \left\{ 
 \begin{array}{ll}

\begin{tikzpicture}%[scale=1.5]
	\fill[blue,blue, opacity=.5] (0,0)--(2.5,0)--(3.5,1.2)--(0,0);
 	\node[circle, fill, inner sep=.8pt, outer sep=0pt] (A) at (0,0){};
	\node[circle, fill, inner sep=.8pt, outer sep=0pt] (B) at (2.5,0){};
	\node[circle, fill, inner sep=.8pt, outer sep=0pt] (C) at (3.5,1.2){};
	\node[circle, fill, inner sep=.8pt, outer sep=0pt] (E) at (1.7,3){};
	\begin{scope}
		\draw [midarrow={>}] (A)--(B) node[font=\tiny, midway, below]{$ a $\,\,\,};
		\draw [midarrow={>}] (B)--(C) node[font=\tiny, midway, left]{$ b $};
		\draw [dashed,thick,midarrow={>}] (A)--(C) node[font=\tiny, midway, below]{$ ab $};
		\draw [midarrow={>}] (A)--(E) node[font=\tiny, midway, left]{$ abcd $};
		\draw [midarrow={>}] (B)--(E) node[font=\tiny, midway, right]{$ bcd $};
		\draw [midarrow={>}] (C)--(E) node[font=\tiny, midway, right]{$ cd $};
		\draw[thick,blue,blue] (A)--(B)--(C);
		\draw[dashed,thick,blue,blue] (C)--(A);
		\fill[blue, opacity=.5] (A) circle (3pt);
		\fill[blue, opacity=.5] (B) circle (3pt);
		\fill[blue, opacity=.5] (C) circle (3pt);
	\end{scope}
\end{tikzpicture}

& \alpha_{2 , 2 , 2 , 3} ( a , b , c d ) 
\\
\\

\begin{tikzpicture}%[scale=1.5]
	\fill[blue,blue, opacity=.5] (0,0)--(3.5,1.2)--(1,1.2)--(0,0);
 	\node[circle, fill, inner sep=.8pt, outer sep=0pt] (A) at (0,0){};
	\node[circle, fill, inner sep=.8pt, outer sep=0pt] (C) at (3.5,1.2){};
	\node[circle, fill, inner sep=.8pt, outer sep=0pt] (D) at (1,1.2){};
	\node[circle, fill, inner sep=.8pt, outer sep=0pt] (E) at (1.7,3){};
	\begin{scope}
		\draw [dashed,midarrow={>}] (C)--(D) node[font=\tiny, midway, below]{\,\,\,\,$ c $};
		\draw [dashed,midarrow={>}] (A)--(D) node[font=\tiny, midway, right]{$ abc $};
		\draw [thick,midarrow={>}] (A)--(C) node[font=\tiny, midway, below]{$ ab $};
		\draw [midarrow={>}] (A)--(E) node[font=\tiny, midway, left]{$ abcd $};
		\draw [midarrow={>}] (C)--(E) node[font=\tiny, midway, right]{$ cd $};
		\draw [dashed,midarrow={>}] (D)--(E) node[font=\tiny, midway, right]{$ d $};
		\draw[thick,blue,blue] (A)--(C);
		\draw[dashed,thick,blue,blue] (C)--(D)--(A);
		\fill[blue, opacity=.5] (A) circle (3pt);
		\fill[blue, opacity=.5] (C) circle (3pt);
		\fill[blue, opacity=.5] (D) circle (3pt);
	\end{scope}
\end{tikzpicture}

& \displaystyle \alpha_{2 , 2 , 2 , 3} ( a b , c , d )
\\
\\

\begin{tikzpicture}%[scale=1.5]
	\fill[blue,blue, opacity=.5] (0,0)--(2.5,0)--(3.5,1.2)--(0,0);
 	\node[circle, fill, inner sep=.8pt, outer sep=0pt] (A) at (0,0){};
	\node[circle, fill, inner sep=.8pt, outer sep=0pt] (B) at (2.5,0){};
	\node[circle, fill, inner sep=.8pt, outer sep=0pt] (C) at (3.5,1.2){};
	\node[circle, fill, inner sep=.8pt, outer sep=0pt] (F) at (1.7,-2){};
	\begin{scope}
		\draw [midarrow={>}] (A)--(B) node[font=\tiny, midway, below]{$ a $\,\,\,};
		\draw [midarrow={>}] (B)--(C) node[font=\tiny, midway, left]{$ b $};
		\draw [thick,midarrow={>}] (A)--(C) node[font=\tiny, midway, below]{$ ab $};
		\draw [midarrow={>}] (A)--(F) node[font=\tiny, midway, left]{$ abce $};
		\draw [midarrow={>}] (B)--(F) node[font=\tiny, midway, left]{$ bce$\!\!};
		\draw [midarrow={>}] (C)--(F) node[font=\tiny, midway, right]{$ ce $};
		\draw[thick,blue,blue] (A)--(B)--(C);
		\draw[thick,blue,blue] (C)--(A);
		\fill[blue, opacity=.5] (A) circle (3pt);
		\fill[blue, opacity=.5] (B) circle (3pt);
		\fill[blue, opacity=.5] (C) circle (3pt);
	\end{scope}
\end{tikzpicture}

& \displaystyle \alpha^{-1}_{2 , 2 , 2 , 3} ( a , b , c e )
\\
\\

\begin{tikzpicture}%[scale=1.5]
	\fill[blue,blue, opacity=.5] (0,0)--(3.5,1.2)--(1,1.2)--(0,0);
 	\node[circle, fill, inner sep=.8pt, outer sep=0pt] (A) at (0,0){};
	\node[circle, fill, inner sep=.8pt, outer sep=0pt] (C) at (3.5,1.2){};
	\node[circle, fill, inner sep=.8pt, outer sep=0pt] (D) at (1,1.2){};
	\node[circle, fill, inner sep=.8pt, outer sep=0pt] (F) at (1.7,-2){};
	\begin{scope}
		\draw [dashed,midarrow={>}] (C)--(D) node[font=\tiny, midway, below]{\,\,\,\,$ c $};
		\draw [dashed,midarrow={>}] (A)--(D) node[font=\tiny, midway, right]{$ abc $};
		\draw [thick,midarrow={>}] (A)--(C) node[font=\tiny, midway, below]{$ ab $};
		\draw [midarrow={>}] (A)--(F) node[font=\tiny, midway, left]{$ abce $};
		\draw [midarrow={>}] (C)--(F) node[font=\tiny, midway, right]{$ ce $};
		\draw [dashed,midarrow={>}] (D)--(F) node[font=\tiny, midway, right]{$ e $};
		\draw[thick,blue,blue] (A)--(C);
		\draw[thick,blue,blue] (C)--(D)--(A);
		\fill[blue, opacity=.5] (A) circle (3pt);
		\fill[blue, opacity=.5] (C) circle (3pt);
		\fill[blue, opacity=.5] (D) circle (3pt);
	\end{scope}
\end{tikzpicture}

& \displaystyle \alpha^{-1}_{2 , 2 , 2 , 3} ( a b , c , e )
\\

\end{array} \right.$

\end{tabular}
\end{figure}

%%%%%%%%%%%%%%%%%%%%%%%%%%%%%%%%% End of Relation : 4-4 move %%%%%%%%%%%%%%%%%%%%%%%%%%%%%%%%% 

%%%%%%%%%%%%%%%%%%%%%%%%%%%%%%%%% Relation : 2-6 move %%%%%%%%%%%%%%%%%%%%%%%%%%%%%%%%% 

\begin{figure}[p]\caption{Two Tetrahedron State of 2-6 Move \label{two_extended} \vspace*{6pt}}
\begin{tabular}{ll}

$\displaystyle 
\begin{tikzpicture}%[thick][scale=1.5]
\fill[blue,blue, opacity=.5] (0,0)--(1.9,-0.5)--(3,0.5)--(0,0);
	\node[circle, fill, inner sep=.9pt, outer sep=0pt] (A) at (0,0){};
	\node[circle, fill, inner sep=.9pt, outer sep=0pt] (B) at (1.9,-0.5){};
	\node[circle, fill, inner sep=.9pt, outer sep=0pt] (C) at (3,0.5){};
	\node[circle, fill, inner sep=.9pt, outer sep=0pt] (D) at (1.5,1.8){};
	\node[circle, fill, inner sep=.9pt, outer sep=0pt] (E) at (1.5,-1.8){};
		\begin{scope}
			\draw [midarrow={>}] (A)--(B) node[font=\tiny, midway, below]{$ a $};
			\draw [midarrow={>}] (B)--(C) node[font=\tiny, midway, left]{$ b $};
			\draw [dashed,midarrow={>}] (A)--(C) node[font=\tiny, midway, below]{$ a b $\,\,\,};	
			\draw [midarrow={>}] (A)--(D) node[font=\tiny, midway, left]{$ a b c $};
			\draw [midarrow={>}] (B)--(D) node[font=\tiny, midway, right]{$ b c $};
			\draw [midarrow={>}] (C)--(D) node[font=\tiny, midway, right]{$ c $};
			\draw [midarrow={>}] (A)--(E) node[font=\tiny, midway, left]{$ a b d $};
			\draw [midarrow={>}] (B)--(E) node[font=\tiny, midway, left] {$ b d $};
			\draw [midarrow={>}] (C)--(E) node[font=\tiny, midway, right]{$ d $};
			\fill[blue, opacity=.5] (A) circle (3pt);
			\fill[blue, opacity=.5] (B) circle (3pt);
			\fill[blue, opacity=.5] (C) circle (3pt);
			\draw[thick,blue,blue] (A)--(B);
			\draw[thick,blue,blue] (B)--(C);
			\draw[dashed,thick,blue,blue] (A)--(C);			
		\end{scope}
\end{tikzpicture}$

&
 
$ \left\{ 
 \begin{array}{ll}

\begin{tikzpicture}%[thick][scale=1.5]
\fill[blue,blue, opacity=.5] (0,0)--(1.9,-0.5)--(3,0.5)--(0,0);
	\node[circle, fill, inner sep=.9pt, outer sep=0pt] (A) at (0,0){};
	\node[circle, fill, inner sep=.9pt, outer sep=0pt] (B) at (1.9,-0.5){};
	\node[circle, fill, inner sep=.9pt, outer sep=0pt] (C) at (3,0.5){};
	\node[circle, fill, inner sep=.9pt, outer sep=0pt] (D) at (1.5,1.8){};
		\begin{scope}
			\draw [midarrow={>}] (A)--(B) node[font=\tiny, midway, below]{$ a $};
			\draw [midarrow={>}] (B)--(C) node[font=\tiny, midway, right]{$ b $};
			\draw [dashed,midarrow={>}] (A)--(C) node[font=\tiny, midway, below]{$ a b $\,\,\,};	
			\draw [midarrow={>}] (A)--(D) node[font=\tiny, midway, left]{$ a b c $};
			\draw [midarrow={>}] (B)--(D) node[font=\tiny, midway, right]{$ b c $};
			\draw [midarrow={>}] (C)--(D) node[font=\tiny, midway, right]{$ c $};
			\fill[blue, opacity=.5] (A) circle (3pt);
			\fill[blue, opacity=.5] (B) circle (3pt);
			\fill[blue, opacity=.5] (C) circle (3pt);
			\draw[thick,blue,blue] (A)--(B);
			\draw[thick,blue,blue] (B)--(C);
			\draw[dashed,thick,blue,blue] (A)--(C);			
		\end{scope}
\end{tikzpicture}

& \alpha_{2 , 2 , 2 , 3} ( a , b , c ) 

\\
\\

\begin{tikzpicture}%[thick][scale=1.5]
\fill[blue,blue, opacity=.5] (0,0)--(1.9,-0.5)--(3,0.5)--(0,0);
	\node[circle, fill, inner sep=.9pt, outer sep=0pt] (A) at (0,0){};
	\node[circle, fill, inner sep=.9pt, outer sep=0pt] (B) at (1.9,-0.5){};
	\node[circle, fill, inner sep=.9pt, outer sep=0pt] (C) at (3,0.5){};
	\node[circle, fill, inner sep=.9pt, outer sep=0pt] (E) at (1.5,-1.8){};
		\begin{scope}
			\draw [midarrow={>}] (A)--(B) node[font=\tiny, midway, below]{$ a $};
			\draw [midarrow={>}] (B)--(C) node[font=\tiny, midway, left]{$ b $};
			\draw [midarrow={>}] (A)--(C) node[font=\tiny, midway, above]{$ a b $\,\,\,};	
			\draw [midarrow={>}] (A)--(E) node[font=\tiny, midway, left]{$ a b d $};
			\draw [midarrow={>}] (B)--(E) node[font=\tiny, midway, left] {$ b d $};
			\draw [midarrow={>}] (C)--(E) node[font=\tiny, midway, right]{$ d $};
			\fill[blue, opacity=.5] (A) circle (3pt);
			\fill[blue, opacity=.5] (B) circle (3pt);
			\fill[blue, opacity=.5] (C) circle (3pt);
			\draw[thick,blue,blue] (A)--(B);
			\draw[thick,blue,blue] (B)--(C);
			\draw[thick,blue,blue] (A)--(C);			
		\end{scope}
\end{tikzpicture}

& \displaystyle \alpha^{-1}_{2 , 2 , 2 , 3} ( a , b , d )

\\ 
\end{array} \right.$

\\
\\

\end{tabular}
\end{figure}
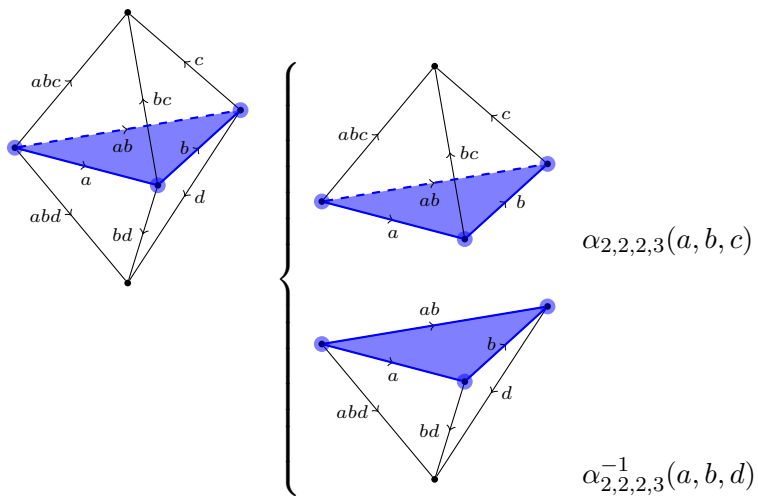

\begin{figure}[p]\caption{Six Tetrahedron State of 2-6 Move \label{six_no_knot} \vspace*{6pt}}
\begin{tabular}{ll}

$\displaystyle 
\begin{tikzpicture}%[thick][scale=1.5]
\fill[blue,blue, opacity=.5] (0,0)--(1.9,-0.5)--(3,0.5)--(0,0);
	\node[circle, fill, inner sep=.9pt, outer sep=0pt] (A) at (0,0){};
	\node[circle, fill, inner sep=.9pt, outer sep=0pt] (B) at (1.9,-0.5){};
	\node[circle, fill, inner sep=.9pt, outer sep=0pt] (C) at (3,0.5){};
	\node[circle, fill, inner sep=.9pt, outer sep=0pt] (D) at (1.5,1.8){};
	\node[circle, fill, inner sep=.9pt, outer sep=0pt] (E) at (1.5,-1.8){};
	\node[circle, fill, inner sep=.9pt, outer sep=0pt] (F) at (1.5,-0.1){};
		\begin{scope}
			\draw [midarrow={>}] (A)--(B) node[font=\tiny, midway, below]{$ a $};
			\draw [midarrow={>}] (B)--(C) node[font=\tiny, midway, below]{\!$ b $};
			\draw [dashed,midarrow={>}] (A)--(C) node[font=\tiny, midway, above]{$ a b $\,\,\,\,\,\,\,};	
			\draw [midarrow={>}] (A)--(D) node[font=\tiny, midway, left]{$ abc $};
			\draw [midarrow={>}] (B)--(D) node[font=\tiny, midway, right]{$ bc $};
			\draw [midarrow={>}] (C)--(D) node[font=\tiny, midway, right]{$ c $};
			\draw [midarrow={>}] (A)--(E) node[font=\tiny, midway, left]{$ abd $};
			\draw [midarrow={>}] (B)--(E) node[font=\tiny, midway, right] {$ bd $};
			\draw [midarrow={>}] (C)--(E) node[font=\tiny, midway, right]{$ d $};
			\draw [dashed,thick, midarrow={>}] (1.5,-0.1) -- (1.5,1.8) node[font=\tiny, midway, left]{$ g $};
			\draw [dashed,thick, midarrow={>}] (1.5,-0.25) -- (1.5,-1.8) node[font=\tiny, midway, left]{$ h $};
			\fill[blue, opacity=.5] (A) circle (3pt);
			\fill[blue, opacity=.5] (B) circle (3pt);
			\fill[blue, opacity=.5] (C) circle (3pt);
			%\draw[thick,red,red, opacity=.5] (A)--(E);
			\draw[thick,blue,blue] (A)--(B);
			\draw[thick,blue,blue] (B)--(C);
			\draw[dashed,thick,blue,blue] (A)--(C);	
			\draw [thick, midarrow={>}] (A)--(F) node[font=\tiny, midway, above]{$ abf $};
			\draw [thick, midarrow={>}] (B)--(F) node[font=\tiny, midway, below]{$ bf $\,\,};
			\draw [thick, midarrow={>}] (C)--(F) node[font=\tiny, midway, below]{\!\!\!\!\!$ f $};		
		\end{scope}
\end{tikzpicture}$

&
 
$ \left\{ 
 \begin{array}{ll}

\begin{tikzpicture}%[thick][scale=1.5]
\fill[blue,blue, opacity=.5] (0,0)--(1.9,-0.5)--(1.5,0.1)--(0,0);
	\node[circle, fill, inner sep=.9pt, outer sep=0pt] (A) at (0,0){};
	\node[circle, fill, inner sep=.9pt, outer sep=0pt] (B) at (1.9,-0.5){};
	\node[circle, fill, inner sep=.9pt, outer sep=0pt] (D) at (1.5,1.8){};
	\node[circle, fill, inner sep=.9pt, outer sep=0pt] (F) at (1.5,0.1){};
		\begin{scope}
			\draw [midarrow={>}] (A)--(B) node[font=\tiny, midway, below]{$ a $};
			\draw [midarrow={>}] (A)--(D) node[font=\tiny, midway, left]{$ abc $};
			\draw [midarrow={>}] (B)--(D) node[font=\tiny, midway, right]{$ bc $};
			\draw [dashed,thick, midarrow={>}] (1.5,0.1) -- (1.5,1.8) node[font=\tiny, midway, left]{$ g $};
			\fill[blue, opacity=.5] (A) circle (3pt);
			\fill[blue, opacity=.5] (B) circle (3pt);
			\fill[blue, opacity=.5] (F) circle (3pt);
			\draw[thick,blue,blue] (A)--(B);
			\draw [dashed,thick, midarrow={>}] (A)--(F) node[font=\tiny, midway, above]{$ abf $};
			\draw [dashed,thick, midarrow={>}] (B)--(F) node[font=\tiny, midway, right]{\,\,$ bf $};
			\draw[dashed,thick,blue,blue] (A)--(F);
			\draw[dashed,thick,blue,blue] (F)--(B);
		\end{scope}
\end{tikzpicture}

& \alpha_{2 , 2 , 2 , 3} ( a , b f , g ) 

\\
\\

\begin{tikzpicture}%[thick][scale=1.5]
\fill[blue,blue, opacity=.5] (0,0)--(1.5,-0.1)--(3,0.5)--(0,0);
	\node[circle, fill, inner sep=.9pt, outer sep=0pt] (A) at (0,0){};
	\node[circle, fill, inner sep=.9pt, outer sep=0pt] (C) at (3,0.5){};
	\node[circle, fill, inner sep=.9pt, outer sep=0pt] (D) at (1.5,1.8){};
	\node[circle, fill, inner sep=.9pt, outer sep=0pt] (F) at (1.5,-0.1){};
		\begin{scope}
			\draw [dashed,midarrow={>}] (A)--(C) node[font=\tiny, midway, above]{$ a b $\,\,\,\,\,\,\,};	
			\draw [midarrow={>}] (A)--(D) node[font=\tiny, midway, left]{$ abc $};
			\draw [midarrow={>}] (C)--(D) node[font=\tiny, midway, right]{$ c $};
			\draw [thick, midarrow={>}] (1.5,-0.1) -- (1.5,1.8) node[font=\tiny, midway, right]{$ g $};
			\fill[blue, opacity=.5] (A) circle (3pt);
			\fill[blue, opacity=.5] (C) circle (3pt);
			\fill[blue, opacity=.5] (F) circle (3pt);
			\draw[dashed,thick,blue,blue] (A)--(C);	
			\draw [thick, midarrow={>}] (A)--(F) node[font=\tiny, midway, below]{$ abf $};
			\draw [thick, midarrow={>}] (C)--(F) node[font=\tiny, midway, below]{$ f $};
			\draw[thick,blue,blue] (F)--(A);
			\draw[thick,blue,blue] (F)--(C);		
		\end{scope}
\end{tikzpicture}

& \displaystyle \alpha^{-1}_{2 , 2 , 2 , 3} ( ab , f , g )

\\ 
\\

\begin{tikzpicture}%[thick][scale=1.5]
\fill[blue,blue, opacity=.5] (1.5,-0.1)--(1.9,-0.5)--(3,0.5)--(1.5,-0.1);
	\node[circle, fill, inner sep=.9pt, outer sep=0pt] (B) at (1.9,-0.5){};
	\node[circle, fill, inner sep=.9pt, outer sep=0pt] (C) at (3,0.5){};
	\node[circle, fill, inner sep=.9pt, outer sep=0pt] (D) at (1.5,1.8){};
	\node[circle, fill, inner sep=.9pt, outer sep=0pt] (F) at (1.5,-0.1){};
		\begin{scope}
			\draw [midarrow={>}] (B)--(C) node[font=\tiny, midway, below]{\!$ b $};
			\draw [midarrow={>}] (B)--(D) node[font=\tiny, midway, right]{$ bc $};
			\draw [midarrow={>}] (C)--(D) node[font=\tiny, midway, right]{$ c $};
			\draw [thick, midarrow={>}] (1.5,-0.1) -- (1.5,1.8) node[font=\tiny, midway, left]{$ g $};
			\fill[blue, opacity=.5] (B) circle (3pt);
			\fill[blue, opacity=.5] (C) circle (3pt);
			\fill[blue, opacity=.5] (F) circle (3pt);
			\draw[thick,blue,blue] (B)--(C);
			\draw [thick, midarrow={>}] (B)--(F) node[font=\tiny, midway, below]{$ bf $\,\,\,};
			\draw [dashed,thick, midarrow={>}] (C)--(F) node[font=\tiny, midway, above]{$ f $};
			\draw[thick,blue,blue] (F)--(B);
			\draw[dashed,thick,blue,blue] (F)--(C);			
		\end{scope}
\end{tikzpicture}

&  \alpha_{2 , 2 , 2 , 3} ( b , f , g )

\\
\\

\begin{tikzpicture}%[thick][scale=1.5]
\fill[blue,blue, opacity=.5] (0,0)--(1.9,-0.5)--(1.5,0.1)--(0,0);
	\node[circle, fill, inner sep=.9pt, outer sep=0pt] (A) at (0,0){};
	\node[circle, fill, inner sep=.9pt, outer sep=0pt] (B) at (1.9,-0.5){};
	\node[circle, fill, inner sep=.9pt, outer sep=0pt] (E) at (1.5,-1.8){};
	\node[circle, fill, inner sep=.9pt, outer sep=0pt] (F) at (1.5,0.1){};
		\begin{scope}
			\draw [midarrow={>}] (A)--(B) node[font=\tiny, midway, below]{$ a $};
			\draw [midarrow={>}] (A)--(E) node[font=\tiny, midway, left]{$ abd $};
			\draw [midarrow={>}] (B)--(E) node[font=\tiny, midway, right] {$ bd $};
			\draw [dashed,thick, midarrow={>}] (F) -- (1.5,-1.8) node[font=\tiny, midway, left]{$ h $};
			\fill[blue, opacity=.5] (A) circle (3pt);
			\fill[blue, opacity=.5] (B) circle (3pt);
			\fill[blue, opacity=.5] (F) circle (3pt);
			\draw[thick,blue,blue] (A)--(B);
			\draw [thick, midarrow={>}] (A)--(F) node[font=\tiny, midway, above]{$ abf $};
			\draw [thick, midarrow={>}] (B)--(F) node[font=\tiny, midway, right]{$ bf $};
			\draw[thick,blue,blue] (F)--(B);
			\draw[thick,blue,blue] (F)--(A);	
		\end{scope}
\end{tikzpicture}

&  \alpha^{-1}_{2 , 2 , 2 , 3} ( a , b f , h )

\\
\\

\begin{tikzpicture}%[thick][scale=1.5]
\fill[blue,blue, opacity=.5] (0,0)--(1.5,-0.1)--(3,0.5)--(0,0);
	\node[circle, fill, inner sep=.9pt, outer sep=0pt] (A) at (0,0){};
	\node[circle, fill, inner sep=.9pt, outer sep=0pt] (C) at (3,0.5){};
	\node[circle, fill, inner sep=.9pt, outer sep=0pt] (E) at (1.5,-1.8){};
	\node[circle, fill, inner sep=.9pt, outer sep=0pt] (F) at (1.5,-0.1){};
		\begin{scope}
			\draw [midarrow={>}] (A)--(C) node[font=\tiny, midway, above]{$ a b $\,\,\,\,\,\,\,};	
			\draw [midarrow={>}] (A)--(E) node[font=\tiny, midway, left]{$ abd $};
			\draw [midarrow={>}] (C)--(E) node[font=\tiny, midway, right]{$ d $};
			\draw [thick, midarrow={>}] (F) -- (1.5,-1.8) node[font=\tiny, midway, left]{$ h $};
			\fill[blue, opacity=.5] (A) circle (3pt);
			\fill[blue, opacity=.5] (C) circle (3pt);
			\fill[blue, opacity=.5] (F) circle (3pt);
			\draw[dashed,thick,blue,blue] (A)--(C);	
			\draw [thick, midarrow={>}] (A)--(F) node[font=\tiny, midway, below]{$ abf $};
			\draw [thick, midarrow={>}] (C)--(F) node[font=\tiny, midway, below]{\!\!\!\!\!$ f $};	
			\draw[thick,blue,blue] (F)--(C);
			\draw[thick,blue,blue] (F)--(A);		
		\end{scope}
\end{tikzpicture}

&  \alpha_{2 , 2 , 2 , 3} ( a b , f , h )

\\
\\

\begin{tikzpicture}%[thick][scale=1.5]
\fill[blue,blue, opacity=.5] (1.5,-0.1)--(1.9,-0.5)--(3,0.5)--(1.5,-0.1);
	\node[circle, fill, inner sep=.9pt, outer sep=0pt] (B) at (1.9,-0.5){};
	\node[circle, fill, inner sep=.9pt, outer sep=0pt] (C) at (3,0.5){};
	\node[circle, fill, inner sep=.9pt, outer sep=0pt] (E) at (1.5,-1.8){};
	\node[circle, fill, inner sep=.9pt, outer sep=0pt] (F) at (1.5,-0.1){};
		\begin{scope}
			\draw [midarrow={>}] (B)--(C) node[font=\tiny, midway, below]{\!$ b $};
			\draw [midarrow={>}] (B)--(E) node[font=\tiny, midway, right] {$ bd $};
			\draw [midarrow={>}] (C)--(E) node[font=\tiny, midway, right]{$ d $};
			\draw [thick, midarrow={>}] (F) -- (1.5,-1.8) node[font=\tiny, midway, left]{$ h $};
			\fill[blue, opacity=.5] (B) circle (3pt);
			\fill[blue, opacity=.5] (C) circle (3pt);
			\fill[blue, opacity=.5] (F) circle (3pt);
			\draw[thick,blue,blue] (B)--(C);
			\draw [thick, midarrow={>}] (B)--(F) node[font=\tiny, midway, below]{$ bf $\,\,};
			\draw [thick, midarrow={>}] (C)--(F) node[font=\tiny, midway, below]{\!\!\!\!\!$ f $};	
			\draw[thick,blue,blue] (F)--(C);
			\draw[thick,blue,blue] (F)--(B);		
		\end{scope}
\end{tikzpicture}

&  \alpha^{-1}_{2 , 2 , 2 , 3} ( b , f , h )

\\

\end{array} \right.$

\\
\\

\end{tabular}
\end{figure}
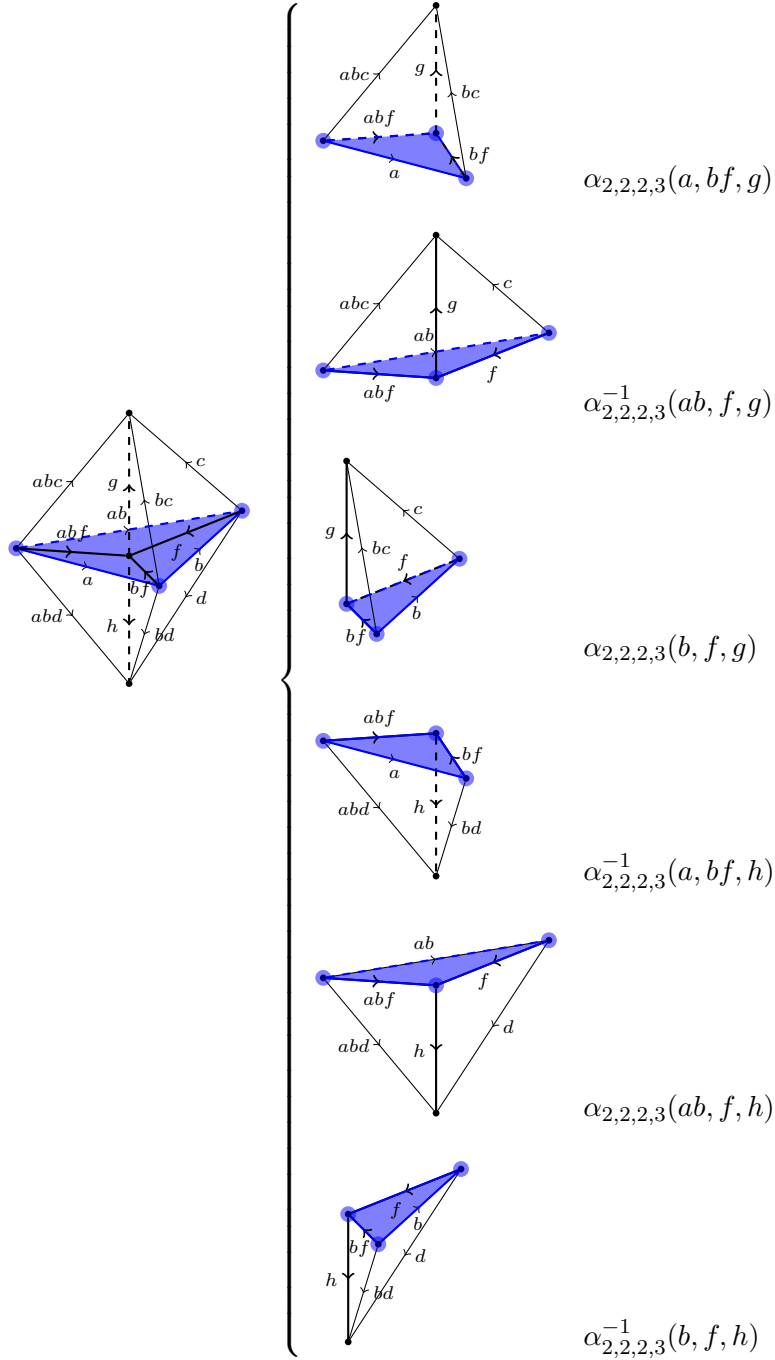

In each case, separating the factors involving $d$ (resp. $g$) from those involving $e$ (resp. $h$) gives an instance of condition (2) in the definition of partial cocycle, which is thus seen to ensure invariance under the illustrated instance of the 4-4 extended Pachner move and the 2-6 extended Pachner move.  Invariance under moves of these forms with a vertex (resp. an edge) of the polygon in the Seifert surface being modified by the move lying in the knot is, by an identical argument, given by the instances of condition (2) with $i=1$ and $j =2$ (resp. $i = j = 1$).  In the case illustrated of the 2-6 extended Pachner move, the new vertex is added to the ordering after the vertices of the triangle lying in the Seifert surface.  However, if it is inserted anywhere else in the ordering, an undefying calculation shows that an equation of the same form ensures invariance under the 2-6 move.

The case of 1-4 and 2-3 Pachner moves is similar.  Invariance under each is ensured by condition (1).  We illustrate this with the instances of the moves in which the boundary contains an edge of the knot bounding a triangle of the Seifert surface.  The before and after states of the 2-3 move are shown in Figure \ref{2-3_with_knot_and_surface}

\begin{figure}[p]\caption{An Instance of a Flag-Like 2-3 Move \label{2-3_with_knot_and_surface}}
%%%%%%%%%%%%%%%%%%%%%%%%%%%%%%%%% Relation : One Blue Plane and Two Red Dots %%%%%%%%%%%%%%%%%%%%%%%%%%%%%%%%% 
\begin{tabular}{ll}

$\displaystyle
\begin{tikzpicture}%[thick][scale=1.5]
	\fill[blue,blue, opacity=.5] (1.5,1.8)--(0,0)--(1.9,-0.5);
        \node[circle, fill, inner sep=.9pt, outer sep=0pt] (A) at (1.5,1.8){};
	\node[circle, fill, inner sep=.9pt, outer sep=0pt] (B) at (0,0){};
	\node[circle, fill, inner sep=.9pt, outer sep=0pt] (C) at (1.9,-0.5){};
	\node[circle, fill, inner sep=.9pt, outer sep=0pt] (D) at (3,0.5){};
	\node[circle, fill, inner sep=.9pt, outer sep=0pt] (E) at (1.5,-1.8){};
		\begin{scope}
			\draw [dashed,midarrow={>}] (B)--(D) node[font=\tiny, midway, below]{$b \cdot c$};	
			\draw [midarrow={>}] (A)--(B) node[font=\tiny, midway, left]{$a$};
			\draw [midarrow={>}] (B)--(C) node[font=\tiny, midway, below]{$b$};
			\draw [midarrow={>}] (C)--(D) node[font=\tiny, midway, above]{$c$};
			\draw [midarrow={>}] (A)--(C) node[font=\tiny, midway, right]{$a \cdot b$};
			\draw [midarrow={>}] (A)--(D) node[font=\tiny, midway, right]{\,$a \cdot b \cdot c$\,};
			\draw [midarrow={>}] (B)--(E) node[font=\tiny, midway, left]{$b \cdot c \cdot d$\,};
			\draw [midarrow={>}] (C)--(E) node[font=\tiny, midway, left] {$c \cdot d$\,};
			\draw [midarrow={>}] (D)--(E) node[font=\tiny, midway, right]{$d$};
			\fill[red, opacity=.5] (A) circle (3pt);
			\fill[red, opacity=.5] (B) circle (3pt);
			\fill[blue, opacity=.5] (C) circle (3pt);
			\draw[thick,red,red, opacity=.5] (A)--(B);
			\draw[thick,blue,blue, opacity=.5] (B)--(C)--(A);	
		\end{scope}
\end{tikzpicture}$

&
 
$ \left\{ 
 \begin{array}{ll}

\begin{tikzpicture}%[thick][scale=1.5]
	\fill[blue,blue, opacity=.5] (1.5,1.8)--(0,0)--(1.9,-0.5);
        \node[circle, fill, inner sep=.9pt, outer sep=0pt] (A) at (1.5,1.8){};
	\node[circle, fill, inner sep=.9pt, outer sep=0pt] (B) at (0,0){};
	\node[circle, fill, inner sep=.9pt, outer sep=0pt] (C) at (1.9,-0.5){};
	\node[circle, fill, inner sep=.9pt, outer sep=0pt] (D) at (3,0.5){};
		\begin{scope}
			\draw [dashed,midarrow={>}] (B)--(D) node[font=\tiny, midway, below]{$b \cdot c$};	
			\draw [midarrow={>}] (A)--(B) node[font=\tiny, midway, left]{$a$};
			\draw [midarrow={>}] (B)--(C) node[font=\tiny, midway, below]{$b$};
			\draw [midarrow={>}] (C)--(D) node[font=\tiny, midway, above]{$c$};
			\draw [midarrow={>}] (A)--(C) node[font=\tiny, midway, right]{$a \cdot b$};
			\draw [midarrow={>}] (A)--(D) node[font=\tiny, midway, right]{\,$a \cdot b \cdot c$\,};
			\fill[red, opacity=.5] (A) circle (3pt);
			\fill[red, opacity=.5] (B) circle (3pt);
			\fill[blue, opacity=.5] (C) circle (3pt);
			\draw[thick,red,red, opacity=.5] (A)--(B);
			\draw[thick,blue,blue, opacity=.5] (B)--(C)--(A);		
		\end{scope}
\end{tikzpicture}

& \alpha^{-1}_{1 , 1 , 2 , 3} ( a , b , c ) 
\\
\\

\begin{tikzpicture}%[thick][scale=1.5]
	\node[circle, fill, inner sep=.9pt, outer sep=0pt] (B) at (0,0){};
	\node[circle, fill, inner sep=.9pt, outer sep=0pt] (C) at (1.9,-0.5){};
	\node[circle, fill, inner sep=.9pt, outer sep=0pt] (D) at (3,0.5){};
	\node[circle, fill, inner sep=.9pt, outer sep=0pt] (E) at (1.5,-1.8){};
		\begin{scope}
			\draw [midarrow={>}] (B)--(D) node[font=\tiny, midway, below]{$b \cdot c$};	
			\draw [midarrow={>}] (B)--(C) node[font=\tiny, midway, below]{$b$};
			\draw [midarrow={>}] (C)--(D) node[font=\tiny, midway, above]{$c$};
			\draw [midarrow={>}] (B)--(E) node[font=\tiny, midway, left]{$b \cdot c \cdot d$\,};
			\draw [midarrow={>}] (C)--(E) node[font=\tiny, midway, left] {$c \cdot d$\,};
			\draw [midarrow={>}] (D)--(E) node[font=\tiny, midway, right]{$d$};
			\fill[red, opacity=.5] (B) circle (3pt);
			\fill[blue, opacity=.5] (C) circle (3pt);
			\draw[thick,blue,blue, opacity=.5] (B)--(C);
		\end{scope}
\end{tikzpicture}

& \displaystyle \alpha^{-1}_{1 , 2 , 3 , 3} ( b , c , d )
\\
\end{array} \right.$

\\
\\

$\displaystyle 
\begin{tikzpicture}%[thick][scale=1.5]	
	\fill[blue,blue, opacity=.5] (1.5,1.8)--(0,0)--(1.9,-0.5);
        \node[circle, fill, inner sep=.9pt, outer sep=0pt] (A) at (1.5,1.8){};
	\node[circle, fill, inner sep=.9pt, outer sep=0pt] (B) at (0,0){};
	\node[circle, fill, inner sep=.9pt, outer sep=0pt] (C) at (1.9,-0.5){};
	\node[circle, fill, inner sep=.9pt, outer sep=0pt] (D) at (3,0.5){};
	\node[circle, fill, inner sep=.9pt, outer sep=0pt] (E) at (1.5,-1.8){};
		\begin{scope}
			\draw [dashed,midarrow={>}] (B)--(D) node[font=\tiny, midway, left]{$bc$};	
			\draw [midarrow={>}] (A)--(B) node[font=\tiny, midway, left]{$a$};
			\draw [midarrow={>}] (B)--(C) node[font=\tiny, midway, below]{$b$};
			\draw [midarrow={>}] (C)--(D) node[font=\tiny, midway, above]{$c$};
			\draw [midarrow={>}] (A)--(C) node[font=\tiny, midway, right]{$ab$};
			\draw [midarrow={>}] (A)--(D) node[font=\tiny, midway, right]{\,$abc$\,};
			\draw [midarrow={>}] (B)--(E) node[font=\tiny, midway, left]{$bcd$\,};
			\draw [midarrow={>}] (C)--(E) node[font=\tiny, midway, right] {$cd$\,};
			\draw [midarrow={>}] (D)--(E) node[font=\tiny, midway, right]{$d$};
			\draw [dashed,thick, midarrow={>}] (1.5,1.8) -- (1.5,-0.2) node[font=\tiny, midway, left]{$abcd$};
			\draw [dashed,thick] (1.5,-0.6) -- (1.5,-1.8);		
			\fill[red, opacity=.5] (A) circle (3pt);
			\fill[red, opacity=.5] (B) circle (3pt);
			\fill[blue, opacity=.5] (C) circle (3pt);
			\draw[thick,red,red, opacity=.5] (A)--(B);
			\draw[thick,blue,blue, opacity=.5] (B)--(C)--(A);	
		\end{scope}
\end{tikzpicture}$

&
 
$ \left\{ 
 \begin{array}{ll}

\begin{tikzpicture}%[thick][scale=1.5]
	\fill[blue,blue, opacity=.5] (1.5,1.8)--(0,0)--(1.9,-0.5);
        \node[circle, fill, inner sep=.9pt, outer sep=0pt] (A) at (1.5,1.8){};
	\node[circle, fill, inner sep=.9pt, outer sep=0pt] (B) at (0,0){};
	\node[circle, fill, inner sep=.9pt, outer sep=0pt] (C) at (1.9,-0.5){};
	\node[circle, fill, inner sep=.9pt, outer sep=0pt] (E) at (1.5,-1.8){};
		\begin{scope}
			\draw [midarrow={>}] (A)--(B) node[font=\tiny, midway, left]{$a$};
			\draw [midarrow={>}] (B)--(C) node[font=\tiny, midway, below]{$b$};
			\draw [midarrow={>}] (A)--(C) node[font=\tiny, midway, right]{$ab$};
			\draw [midarrow={>}] (B)--(E) node[font=\tiny, midway, left]{$bcd$\,};
			\draw [midarrow={>}] (C)--(E) node[font=\tiny, midway, right] {$cd$\,};
			\draw [dashed,thick, midarrow={>}] (1.5,1.8) -- (1.5,-0.2) node[font=\tiny, midway, left]{$abcd$};
			\draw [dashed,thick] (1.5,-0.6) -- (1.5,-1.8);
			\fill[red, opacity=.5] (A) circle (3pt);
			\fill[red, opacity=.5] (B) circle (3pt);
			\fill[blue, opacity=.5] (C) circle (3pt);
			\draw[thick,red,red, opacity=.5] (A)--(B);
			\draw[thick,blue,blue, opacity=.5] (B)--(C)--(A);		
		\end{scope}
\end{tikzpicture}

& \alpha^{-1}_{1 , 1 , 2 , 3} ( a , b , c d ) 

\\
\\

\begin{tikzpicture}%[thick][scale=1.5]
	\node[circle, fill, inner sep=.9pt, outer sep=0pt] (A) at (1.5,1.8){};
	\node[circle, fill, inner sep=.9pt, outer sep=0pt] (C) at (1.9,-0.5){};
	\node[circle, fill, inner sep=.9pt, outer sep=0pt] (D) at (3,0.5){};
	\node[circle, fill, inner sep=.9pt, outer sep=0pt] (E) at (1.5,-1.8){};
		\begin{scope}
			\draw [midarrow={>}] (C)--(D) node[font=\tiny, midway, above]{$c$};
			\draw [midarrow={>}] (A)--(C) node[font=\tiny, midway, right]{$ab$};
			\draw [midarrow={>}] (A)--(D) node[font=\tiny, midway, right]{\,$abc$\,};
			\draw [midarrow={>}] (C)--(E) node[font=\tiny, midway, right] {$cd$\,};
			\draw [midarrow={>}] (D)--(E) node[font=\tiny, midway, right]{$d$};
			\draw [midarrow={>}] (A)--(E) node[font=\tiny, midway, left]{$abcd$};
			\fill[red, opacity=.5] (A) circle (3pt);
			\fill[blue, opacity=.5] (C) circle (3pt);
			\draw[thick,blue,blue, opacity=.5] (A)--(C);
		\end{scope}
\end{tikzpicture}

& \displaystyle \alpha^{-1}_{1 , 2 , 3 , 3} ( a b , c , d )

\\ 
\\

\begin{tikzpicture}%[thick][scale=1.5]
         \node[circle, fill, inner sep=.9pt, outer sep=0pt] (A) at (1.5,1.8){};
	\node[circle, fill, inner sep=.9pt, outer sep=0pt] (B) at (0,0){};
	\node[circle, fill, inner sep=.9pt, outer sep=0pt] (D) at (3,0.5){};
	\node[circle, fill, inner sep=.9pt, outer sep=0pt] (E) at (1.5,-1.8){};
		\begin{scope}
			\draw [dashed,midarrow={>}] (B)--(D) node[font=\tiny, midway, left]{$bc$};	
			\draw [midarrow={>}] (A)--(B) node[font=\tiny, midway, left]{$a$};
			\draw [midarrow={>}] (A)--(D) node[font=\tiny, midway, right]{\,$abc$\,};
			\draw [midarrow={>}] (B)--(E) node[font=\tiny, midway, left]{$bcd$\,};
			\draw [midarrow={>}] (D)--(E) node[font=\tiny, midway, right]{$d$};
			\draw [midarrow={>}] (A)--(E) node[font=\tiny, midway, right]{$abcd$};
			\fill[red, opacity=.5] (A) circle (3pt);
			\fill[red, opacity=.5] (B) circle (3pt);
			\draw[thick,red,red, opacity=.5] (A)--(B);	
		\end{scope}
\end{tikzpicture}

&  \alpha_{1 , 1 , 3 , 3} ( a , b c, d )

\\
\end{array} \right.$
%%%%%%%%%%%%%%%%%%%%%%%%%%%%%%%%% End of Relation : One Blue Plane and Two Red Dots %%%%%%%%%%%%%%%%%%%%%%%%%%%%%%%%% 

\end{tabular}
\end{figure}
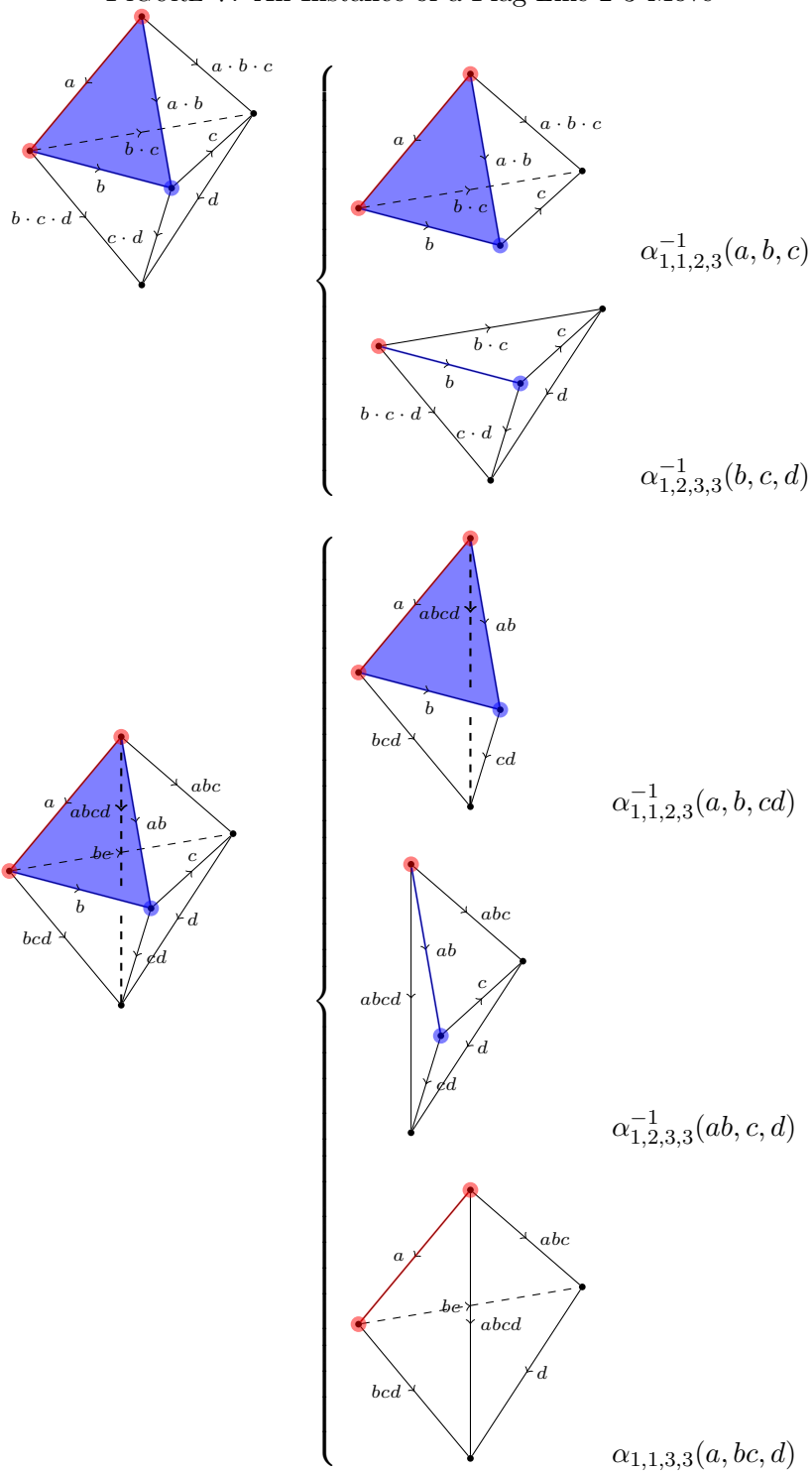

Plainly invariance under this move is equivalent to the relation
\begin{eqnarray*}
\lefteqn{ \alpha^{-1}_{1 , 1 , 2 , 3} ( a , b , c ) \, \cdot \, \alpha^{-1}_{1 , 2 , 3 , 3} ( b , c , d )  =} \\
& &  \alpha^{-1}_{1 , 1 , 2 , 3} ( a , b , c d ) \, \cdot \, \alpha^{-1}_{1 , 2 , 3 , 3} ( a b , c , d ) \, \cdot \, \alpha_{1 , 1 , 3 , 3} ( a , b c, d )   \end{eqnarray*}

\noindent which solves to give the instance of condition (1) with indices 1,1,2,3,3.  A little effort shows that the same relation suffices to give invariance under the other instance of a 2-3 move with an triangle on the boundary lying in the Seifert surface, with an edge on the knot -- in which the edge of the knot lies on the equatorial triangle, and under a 1-4 move in which the tetrahedron subdivided similarly has a triangle of its boundary lying in the Seifert surface with an edge on the knot, regardless of whether the new vertex is inserted into the ordering before or after the original vertex lying in the 3-dimensional stratum.

Likewise the other instances of condition (1) with indices 3,3,3,3,3 (resp. 2,3,3,3,3; 1,3,3,3,3; 2,2,3,3,3; 1,2,3,3,3; 1,1,3,3,3; 2,2,2,3,3; 1,2,2,3,3)  give invariance under 2-3 and 1-4 moves in which the boundary of the cell modified is not incident with either the knot or Seifert sufrace (resp. intersect the Seifert surface in a single vertex not on the knot; intersect the knot in a single vertex; is not incident with the knot, but intersects the Seifert surface in an edge; intersects the knot in a single vertex and the Seifert surface in an edge ending at that vertex; intersects the knot in a single edge; is not incident with the knot, but intersects the Seifert surface in a triangle; intersect the Seifert surface in a triangle with a single vertex on the knot).
Again, inserting the new vertex in 1-4 moves anywhere in the ordering gives rise to different instances of the same relation.

Thus the value of the expression is independent of both the triangulation and the ordering(s) of the vertices.
\end{proof}

As with the untwisted invariant, which is the special case of the preceding in which $\alpha \equiv 1$, by general principles explained in \cite{Y}, this invariant is the restriction to the endomorphisms of the monoidal identity, the empty triple of spaces, of a generalized TQFT, a monoidal functor from (3,2,1)-{\bf COBORD}, the category whose objects are triples of an oriented surface containing an oriented curve with boundary, with disjoint union as the monoidal product, and whose arrows are PL-homeomorphism classes of cobordisms between these, to $k$-{\bf v.s.}.

\section{Examples of Initial Data}

Finite $\mathsf 3$-parcels are easy to construct.  Given any three finite groups $G_i$ $i = 1, 2,3$ and finite sets $X_{1,2}$ and $X_{2,3}$, with $X_{i,j}$ equipped with a left $G_i$-action and a right $G_j$ action, which commute with each other, there is a $\mathsf 3$-parcel $\mathcal C$ with ${\mathcal C}(i,i) = G_i$, ${\mathcal C}(i,j) = X_{i,j}$ for $(i,j) = (1,2)$ or $(2,3)$ and 

\[ {\mathcal C}(1,3) = X_{1,2} \times X_{2,3} / \sim \]

\noindent where $\sim$ is the equivalence relation induced by

\[ (xg,y) \sim (x,gy) \]

\noindent whenever $x\in X_{1,2}$, $g\in G_2$ and $y\in X_{2,3}$, where the actions of the group are denoted by the null infix.  Composition is given by the group multiplications, group actions, and the map
$(x,y) \mapsto [x,y]$ where $[x,y]$ is the equivalence class of the pair under $\sim$ for $x\in X_{1,2}$ and $y\in X_{2,3}$, as appropriate.  The identity arrows for the object $i$ is the group identity of $G_i$.  It is trivial to verify associativity.

If we add to the data a finite set $X_{1,3}$ with a commuting left $G_1$ action and a right $G_3$ action, together with an biequivariant map  $\varphi: X_{1,2} \times X_{2,3} / \sim \rightarrow X_{1,3}$, and let the composition arrows $x:1\rightarrow 2$ and $y:2\rightarrow 3$ be given by $\varphi([x,y])$, we obtain a complete description of all finite $\mathsf 3$-parcels.

Among these are $\mathsf 3$-parcels constructed from subgroups and subsets of a groups with suitable closure properties after the manner of \cite{DPY} Example 5.2, on which it is, moreover, easy to find examples of partial 3-cocycles using ordinary group cohomology:

\begin{example}
Fix a group $G$.  Let $G_i$ $i = 1,2,3$ be subgroups of $G$ and $X_{1,2}$ (resp. $X_{2,3}$) be a subset of $G$ closed under left multiplication by elements of $G_1$ (resp. $G_2$) and right multiplication by elements of $G_2$ (resp. $G_3$), and let $X_{1,3} := \{ x\xi \; |\;  x \in X_{1,2}\;  \&\;  \xi \in X_{2,3} \}$.

The disjoint union of the $G_i$ for $i = 1,2,3$ and the $X_{i,j}$ for $(i,j) = (1,2), (2,3), (1,3)$ is easily seen to be a $\mathsf 3$-parcel $\mathcal C$ under a composition induced by the group law on $G$ with the obvious underlying functor to $\mathsf 3$.  It is also easy to see that given any $K^\times$-valued 3-cocycle $\alpha$ on $G$, the map induced on $Arr({\mathcal C})$ by composing the canonical maps to $G^3$, induced by the inclusions of the homsets into $G$, with $\alpha$ is a 3-cocycle on $\mathcal C$, and thus a partial 3-cocycle.
\end{example}

\end{document}